\newtheorem{thm}{Theorem}[section]
\newtheorem{cor}[thm]{Corollary}
\newtheorem{lem}[thm]{Lemma}
\newtheorem{prop}[thm]{Proposition}
\newtheorem{defn}[thm]{Definition}
\numberwithin{equation}{section}
\newcommand{\ri}{\rightarrow }
\DeclareMathOperator{\Coker}{Coker}
\DeclareMathOperator{\Aut}{Aut}
\DeclareMathOperator{\Ext}{Ext}
\DeclareMathOperator{\Ker}{Ker}
\begin{document}

\centerline{\bf \large The prolongation of central extensions}
 \vspace{10pt}

\centerline{\bf Nguyen Tien Quang}

\centerline{ Department of Mathematics, Hanoi National University of
Education,  Hanoi, Vietnam} \centerline{cn.nguyenquang@gmail.com}
\vspace{5pt}
 \centerline{\bf Pham Thi Cuc}
 \centerline{ Natural Science Department,
Hongduc University,  Thanhhoa, Vietnam }
\centerline{cucphamhd@gmail.com}

\begin{abstract}  The aim of this paper is to study the  $(\alpha, \gamma)$-prolongation of central extensions. We obtain the obstruction theory for  $(\alpha, \gamma)$-prolongations and classify  $(\alpha, \gamma)$-prolongations thanks to low-dimensional cohomology groups of groups.
\end{abstract}
AMS Subject Classification: 20J05;
20J06.\\
 {\it Keywords}: Crossed product,  group extension, group
cohomology, obstruction.


\section{\bf Introduction}

A description of group extensions by means of factor sets leads to a close relationship between the extension problem of a type of algebras and the corresponding cohomology theory. This allows to study extension problems using cohomology as an effective method  \cite{HS,ML1,wh}.

For a group extension
$$\mathcal E:0\ri A\stackrel{\alpha}{\ri}B\stackrel{\beta}{\ri}C\ri 1$$
and for any homomorphism  $\gamma:C'\ri C$, from the existence of the pull-back of  a pair  $(\gamma,\beta)$, there is always an extension  $\mathcal E'=\mathcal E\gamma$ making the following diagram
\[\begin{diagram}
\xymatrix{\mathcal  E':&0\ar[r]&A\ar[r]^{\alpha'} \ar@{=}[d]
&B'\ar[r]^{\beta'}\ar[d]^{\varphi}&C'\ar[r]
\ar[d]^\gamma&1\\
\mathcal E:&0\ar[r]&A\ar[r]^{\alpha}&B\ar[r] ^\beta&C\ar[r]&1 }
\end{diagram}\]
commute.

This shows the contravariance of a functor  $\Ext(C,A)$
in terms of the first variable. The notion of pull-backs has been widely applied in works related to group extensions (see \cite {P, S}).

Given an extension   $\mathcal E'$  and a homomorphism   $\gamma:C'\ri C$, the problem here is that of finding whether there is any corresponding extension $\mathcal E$ of $A$ by $C$ such that $\mathcal
E'=\mathcal E\gamma$. This problem is still unsolved for the general case. However, a description where the morphism  $id:A\ri A$ in the above diagram is replaced by a homomorphism  $\alpha:A'\ri A$, and $A',A$ are abelian groups, is presented in  \cite{W}. In this paper, our purpose is to show a better description when $\mathcal E'$ is a central extension.  We study the obstruction theory for such extensions and classify those extensions due to low-dimensional cohomology groups.

Firstly, we introduce the notion of
$(\alpha,\gamma)$-prolongations of central extensions
 $\mathcal E_0$ and show that each such $(\alpha,\gamma)$-prolongation
 induces a crossed module. The relationship between  group extensions and crossed modules leads to many interesting results (see \cite {BM, ML2, wh}).  Here, the notion of pre-prolongation of
$\mathcal E_0$ is derived from the induced crossed module. The obstruction of such a pre-prolongation is an element in the cohomology group $H^{3}(\Coker\gamma,A)$ whose vanishing is necessary and sufficient for there to exist an  $(\alpha,\gamma)$-prolongation (Theorem \ref{dl8}). Moreover, each such $(\alpha,\gamma)$-prolongation is a central extension (Theorem \ref{dl10}). Finally, we state the Schreier theory for  $(\alpha,\gamma)$-prolongations (Theorem \ref{dl15}).


\section{\bf {\bf \em{\bf $(\alpha,\gamma)$-prolongations of central extensions}}}
\vskip 0.4 true cm

Given a diagram of group homomorphisms
\[\begin{diagram}
\xymatrix{\mathcal  E_0:&0\ar[r]&A_0\ar[r]^{j_0}\ar[d]^{ \alpha }&B_0\ar[r]^{p_0}\ar[d]^{\beta}&G_0\ar[r]
\ar[d]^\gamma&0\\
\mathcal E:&0\ar[r]&A\ar[r]^{j}&B\ar[r]^p&G\ar[r]&0 , }
\end{diagram}\]
where the rows are exact,  $j_0A_0\subset ZB_0$, $\gamma$    is a normal monomorphism (in the sense that  $\gamma G_0$ is a normal subgroup of $G$) and  $
\alpha $ is an epimorphism.  Then $\mathcal E$ is said to be an  $( \alpha
,\gamma)$-\emph{prolongation of   $\mathcal E_0$.}

No loss of generality in assuming that  $j_0$ is an inclusion map and $A_0$ can be identified with  the subgroup $j_0A_0$ of $B_0$.
  In addition, we denote
 $\Pi_0=\Coker\gamma$, $E_0=B_0/\Ker \alpha$ and let  $\sigma:G\ri \Pi_0$ be  the natural projection. Obviously,  $\Ker \beta =j_0\Ker \alpha $.

For convenience, we write the  operation in   $\Pi_0$  as multiplication and in other groups as addition, even though the groups $B_0, G_0, B,G$ are non-necessarily abelian.

The factor group  $\Coker\gamma$ plays a fundamental role in our study, as well as in the first literature \cite {ML1} and  in the recent ones \cite{CS}.

\begin{lem}\label{bd1}  Any $( \alpha ,\gamma)$-prolongation  of   $\mathcal E_0$ induces  an exact sequence of group homomorphisms
\begin{align}0\rightarrow E_0\xrightarrow{\varepsilon} B\xrightarrow{\sigma p} \Pi_0 \rightarrow 1, \label{eq1'}   \end{align}
where  $\varepsilon(b_0+\Ker  \alpha )=\beta(b_0) $.
\end{lem}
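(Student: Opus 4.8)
The plan is a direct diagram chase in which the three standing hypotheses --- $j_0A_0\subseteq ZB_0$, $\gamma$ a monomorphism, and $\alpha$ an epimorphism --- each enter at a different point. First one checks that $E_0=B_0/\Ker\alpha$ even makes sense: since $\Ker\alpha\subseteq A_0=j_0A_0\subseteq ZB_0$, the subgroup $\Ker\alpha$ is central, hence normal, in $B_0$. Next, $\varepsilon$ is well defined because $\beta$ annihilates $\Ker\alpha$: commutativity gives $\beta j_0=j\alpha$, so for $a_0\in\Ker\alpha$ we have $\beta(a_0)=j\alpha(a_0)=0$; thus $b_0+\Ker\alpha\mapsto\beta(b_0)$ descends to a homomorphism $\varepsilon\colon E_0\ri B$.

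Exactness at $E_0$, i.e. injectivity of $\varepsilon$, follows from the identity $\Ker\beta=j_0\Ker\alpha$ recorded above --- this is the step that uses $\gamma$ being a monomorphism: if $\beta(b_0)=0$ then $b_0\in\Ker\beta=j_0\Ker\alpha$, so $b_0+\Ker\alpha=0$ in $E_0$. Exactness at $\Pi_0$, i.e. surjectivity of $\sigma p$, is immediate, since $p$ is onto and $\sigma\colon G\ri\Pi_0=\Coker\gamma$ is the canonical projection.

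The only real computation is exactness at $B$, that is, $\operatorname{Im}\varepsilon=\Ker(\sigma p)$. For $\operatorname{Im}\varepsilon\subseteq\Ker(\sigma p)$: from $p\beta=\gamma p_0$ we get $p\beta(b_0)\in\gamma G_0=\Ker\sigma$, so $\sigma p(\beta(b_0))=1$. For the reverse inclusion, let $b\in B$ with $\sigma p(b)=1$, so that $p(b)\in\gamma G_0$, say $p(b)=\gamma(g_0)$; lifting $g_0$ along the epimorphism $p_0$ to some $b_0\in B_0$ (so $p_0(b_0)=g_0$) gives $p\big(b-\beta(b_0)\big)=\gamma(g_0)-\gamma p_0(b_0)=0$, hence $b-\beta(b_0)\in\Ker p=jA$. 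Write $b-\beta(b_0)=j(a)$; using that $\alpha$ is onto, choose $a_0\in A_0$ with $\alpha(a_0)=a$, whence $j(a)=j\alpha(a_0)=\beta j_0(a_0)=\beta(a_0)$, and therefore $b=\beta(a_0)+\beta(b_0)=\beta(a_0+b_0)=\varepsilon\big((a_0+b_0)+\Ker\alpha\big)\in\operatorname{Im}\varepsilon$.

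The main obstacle here is not conceptual but organizational: one must keep straight which commutativity or exactness relation and which of the three hypotheses is invoked at each stage --- centrality for the normality of $\Ker\alpha$, $\gamma$ a monomorphism for injectivity of $\varepsilon$, and both $\alpha$ and $p_0$ epimorphisms for the surjection of $\operatorname{Im}\varepsilon$ onto $\Ker(\sigma p)$. Accordingly I would present the argument in exactly the order above: normality of $\Ker\alpha$ and well-definedness of $\varepsilon$, then injectivity of $\varepsilon$, then surjectivity of $\sigma p$, then the two inclusions establishing exactness at $B$.
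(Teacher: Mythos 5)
Your proposal is correct and follows essentially the same diagram chase as the paper, which first establishes exactness of $B_0\xrightarrow{\beta}B\xrightarrow{\sigma p}\Pi_0\rightarrow 1$ and then passes to the induced monomorphism $\varepsilon$ on $E_0=B_0/\Ker\alpha$; you simply work with $\varepsilon$ directly and spell out the well-definedness and injectivity steps (via $\Ker\beta=j_0\Ker\alpha$) that the paper records as "obvious" in its setup.
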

\begin{proof}
First, we show that the sequence
\begin{align*} B_0\xrightarrow{\beta} B \xrightarrow{\sigma p} \Pi_0\rightarrow 1 \end{align*}
is exact. In fact, since
$\sigma
p\beta=(\sigma\gamma)p_0=1$, the above sequence is semi-exact.
Further, for any  $b\in$
Ker$(\sigma p)$, $(\sigma p)(b)=1$.
It follows that
$$p(b)\in \Ker \sigma= \mathrm{Im} \gamma\Rightarrow p(b)=\gamma (g_0),$$
for some  $g_0\in G_0$. Then, there is  $b_0\in B_0$ such that  $p_0(b_0)=g_0$,
and hence  $p(b)=p\beta( b_0)$. This implies
\begin{align*} b= \beta( b_0)+ j \alpha (a_0)= \beta (b_0) + \beta  (a_0)=\beta (b_0+a_0) \end{align*}
for  $a_0\in A_0$.
Thus $b\in $ Im$\beta$.
This proves that the above sequence is exact.

The homomorphism  $\beta$ induces the unique monomorphism
\begin{align*}  \varepsilon: E_0\rightarrow B,\;\;\varepsilon(b_0+\Ker  \alpha)=\beta(b_0)\label{eq0}    \end{align*}
and one has  Im$\varepsilon =\mathrm{Im }\beta. $ Therefore, the sequence  \eqref{eq1'}
is exact.
\end{proof}
Since $ A_0/\Ker\alpha\cong A$ via the canonical isomorphism
$a_0+\Ker\alpha\mapsto \alpha(a_0)$, Lemma \ref{bd1} yields the following commutative diagram
\begin{equation}\begin{diagram}
\xymatrix{ 0\ar[r]& A\ar[r]^{i}\ar@{=}[d]&E_0\ar[r]^{\pi}\ar[d]^{\varepsilon}&G_0\ar[r]\ar[d]^\gamma&0\\
0\ar[r]& A\ar[r]^{j}&B\ar[r]^p&G\ar[r]&0,}
\end{diagram}\label{eq16}
\end{equation}
where
\begin{equation*}
i(\alpha a_0)=a_0+\Ker  \alpha, \;\ \pi(b_0+\Ker \alpha )=p_0(b_0).
\end{equation*}

\begin{defn}[\cite{BM}] \emph{ A \emph{crossed module} is a quadruple  $(B,D,d,\theta)$,
 where $d: B\rightarrow D,\theta : D\rightarrow \Aut B$  are group homomorphisms satisfying the following relations:\\
\indent $C_1. \; \theta d(b)=\mu_b, b \in B,$\\
\indent $C_2. \; d(\theta _x(b))=\mu_x(d(b)),x\in D, b\in B,$\\
where $\mu_x$ is the inner automorphism given by conjugation with $x$.}
 \end{defn}
\begin{thm}\label{dl2}  Any $( \alpha ,\gamma)$-prolongation of  $\mathcal E_0$ induces a homomorphism  $\theta : G\rightarrow \Aut E_0$ such that the quadruple
$(E_0,G,\gamma\pi, \theta) $ is a crossed module.
\end{thm}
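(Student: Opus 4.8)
The plan is to construct $\theta$ from conjugation inside $B$, transported along the monomorphism $\varepsilon\colon E_0\to B$ furnished by Lemma \ref{bd1}. The first point to establish is that $\mathrm{Im}\,\varepsilon=\mathrm{Im}\,\beta$ is a \emph{normal} subgroup of $B$; this is where the hypothesis that $\gamma$ is a normal monomorphism enters. By the commutativity of \eqref{eq16} one has $p(\mathrm{Im}\,\varepsilon)=\gamma\pi(E_0)=\gamma G_0$, which is normal in $G$, so for any $b\in B$ and $e\in E_0$ the element $p(b+\varepsilon(e)-b)$ lies in $p(\mathrm{Im}\,\varepsilon)$; choosing $e'\in E_0$ with $p\varepsilon(e')=p(b+\varepsilon(e)-b)$ gives $b+\varepsilon(e)-b-\varepsilon(e')\in\Ker p=jA$, and since $jA=\varepsilon(iA)\subset\mathrm{Im}\,\varepsilon$ we conclude $b+\varepsilon(e)-b\in\mathrm{Im}\,\varepsilon$. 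Consequently the formula $\bar\theta_b(e)=\varepsilon^{-1}\bigl(b+\varepsilon(e)-b\bigr)$ defines a homomorphism $\bar\theta\colon B\to\Aut E_0$: it is simply conjugation by $b$ restricted to the normal subgroup $\mathrm{Im}\,\varepsilon$, read back through the isomorphism $\varepsilon$.

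The second, and crucial, step is to show that $\bar\theta$ is trivial on $\Ker p=jA$, so that it descends to the quotient $G\cong B/jA$. Given $a\in A$, use surjectivity of $\alpha$ to write $a=\alpha(a_0)$ with $a_0\in A_0$; then $j(a)=\beta(a_0)$ by commutativity of $\mathcal E_0$. For $e=b_0+\Ker\alpha\in E_0$ we obtain $j(a)+\varepsilon(e)-j(a)=\beta(a_0)+\beta(b_0)-\beta(a_0)=\beta(a_0+b_0-a_0)=\beta(b_0)=\varepsilon(e)$, the middle equality because $a_0\in j_0A_0\subset ZB_0$. Hence $\bar\theta_{j(a)}=\mathrm{id}_{E_0}$, and there is a unique homomorphism $\theta\colon G\to\Aut E_0$ with $\theta_{p(b)}=\bar\theta_b$ for all $b\in B$; it is independent of the chosen lift precisely because $\bar\theta$ is already constant on cosets of $jA$.

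Finally I would verify the two crossed-module axioms for the quadruple $(E_0,G,\gamma\pi,\theta)$, writing $d=\gamma\pi$ and using the identity $d=p\varepsilon$ from \eqref{eq16}. For $C_1$, $\theta_{d(e)}=\theta_{p\varepsilon(e)}=\bar\theta_{\varepsilon(e)}$, and $\bar\theta_{\varepsilon(e)}(e')=\varepsilon^{-1}(\varepsilon(e)+\varepsilon(e')-\varepsilon(e))=e+e'-e=\mu_e(e')$. For $C_2$, pick $b$ with $p(b)=g$; then $d(\theta_g e)=p\varepsilon(\bar\theta_b e)=p(b+\varepsilon(e)-b)=p(b)+p\varepsilon(e)-p(b)=g+d(e)-g=\mu_g(d(e))$. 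Both identities are immediate from the definitions.

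The step I expect to be the main obstacle — or at any rate the one that really uses the structure of an $(\alpha,\gamma)$-prolongation rather than just the exactness of the rows — is the vanishing of $\bar\theta$ on $\Ker p$: it requires simultaneously that $\alpha$ be an epimorphism (to realise elements of $jA$ as $\beta$-images of elements of $A_0$) and that $A_0$ be central in $B_0$ (to make those elements commute with all of $\mathrm{Im}\,\beta$). The normality of $\mathrm{Im}\,\varepsilon$ in $B$ is the other place where a hypothesis — $\gamma$ a normal monomorphism — is genuinely used; the remaining verifications are routine diagram chases.
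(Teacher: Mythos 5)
Your proof is correct and follows essentially the same route as the paper's: define conjugation $\phi_b$ on $B$ transported through $\varepsilon$, show it is trivial on $jA$ using the centrality of $A_0$ in $B_0$, descend to $\theta$ on $G\cong B/jA$, and verify $C_1$, $C_2$ via $\gamma\pi=p\varepsilon$. The only difference is cosmetic: you verify normality of $\mathrm{Im}\,\varepsilon$ by hand, whereas it is immediate from Lemma \ref{bd1} since $\mathrm{Im}\,\varepsilon=\Ker(\sigma p)$.
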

\begin{proof}

The exact sequence  \eqref{eq1'}  induces the group homomorphism
$$\phi:B\ri \mathrm{Aut}E_0,\;\;b\mapsto \phi_{b}$$
given  by
\begin{align}\phi_b(e_0)=\varepsilon^{-1}\mu_b(\varepsilon{ e_0} ),\;\ e_0\in E_0.
\label{eq2'} \end{align}
It is easy to see that  $\phi j=id_{E_0}$. In fact, for all  $ a\in A$,
$\phi j(a)=\phi_{ja}$. Since $ia\in ZE_0$,
\begin{align*} \phi_{ja}(e_0)&\stackrel{\eqref{eq2'} }{=}\varepsilon^{-1}\mu_{ja}(\varepsilon e_0)\stackrel{\eqref{eq16}}{=} \varepsilon^{-1}\mu_{\varepsilon ia}(\varepsilon e_0)\\
&=\mu_{ia}(e_0)=e_0.   \end{align*}

Then, by the universal property of Coker, there is a group homomorphism  $\theta: G\ri \Aut E_0$ such that the following diagram
\[\divide\dgARROWLENGTH by 2\begin{diagram}
\node{0}\arrow{e}\node{A}\arrow{e,t}{j}
\node{B}\arrow{s,r}{\phi}\arrow{e,t}{p}\node{G}
\arrow{sw,b}{\theta}\arrow{e}\node{0}\\
\node[3]{\Aut E_0}
\end{diagram} \label{eq17} \]
commutes. The homomorphism  $\theta $ is defined by
\begin{equation}\theta _g=\phi_{b}, \;pb=g. \label{eq4}
\end{equation}

The homomorphisms  $\theta : G\rightarrow \Aut E_0$  and  $\gamma\pi :
E_0\rightarrow G$ satisfy the rules  $C_1,C_2$ in the definition of a crossed module, that is,
\begin{align}   \theta (\gamma\pi)=\mu, \label{eq5} \end{align}
\vspace{-23pt}
\begin{align} (\gamma\pi)\theta _g(e_0)=\mu_g(\gamma\pi(e_0)),\;\ g\in G,\;\ e_0\in E_0.  \label{eq6}  \end{align}
In fact, for  $e_0,c\in E_0$, we get
\begin{align*} \theta \gamma\pi(e_0)(c)&\stackrel{\eqref{eq16}}{=}
\theta p\varepsilon(e_0)(c)\stackrel{\eqref{eq4}  }{=} \phi_{\varepsilon(e_0)}(c)\\
&\stackrel{\eqref{eq2'} }{=}\varepsilon^{-1 }  \mu_{\varepsilon{ e_0}}(\varepsilon
c)=\mu_{e_0}(c). \end{align*}
Now, we show that the relation (\ref{eq6}) holds. Let $ g=p b$, then
\begin{align*} \gamma\pi\theta _g(e_0)&\stackrel{\eqref{eq4} }{=}\gamma\pi\phi_b(e_0)\stackrel{\eqref{eq16}}{=} p\varepsilon\phi_b(e_0)
\stackrel{\eqref{eq2'} }{=} p[\mu_b(\varepsilon e_0)]\\
&=\mu_{pb}(p\varepsilon(e_0))\stackrel{\eqref{eq4} }{=}\mu_g( \gamma\pi(e_0)),
\end{align*}
and the proof is completed.
\end{proof}

\begin{cor} If $\mathcal E_0$ has an  $(\alpha,\gamma)$-prolongation, then the homomorphism  $\theta:G\ri \Aut E_0$ induces the homomorphism  $\theta^\ast:G\ri \Aut{A}$ given by
$\theta^\ast_g(a)=i^{-1}\theta_g(ia)$. Further, $A$ is
 $\Pi_0$-module with  action
\begin{equation*}  xa=\theta^\ast_{u_x}(a), \label{8} \end{equation*}
where $u_x\in G$, $\sigma(u_x)=x$.
\end{cor}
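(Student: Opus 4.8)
The plan is to build $\theta^\ast$ from $\theta$ by exploiting the fact that the left square of diagram \eqref{eq16} commutes with $i$ an injection. First I would verify that each automorphism $\theta_g\in\Aut E_0$ carries the subgroup $iA$ to itself. This follows from relation \eqref{eq6}: if $e_0=ia$ then $\gamma\pi(ia)=0$ since $\pi i=0$ (the top row of \eqref{eq16} is exact), so $\gamma\pi\theta_g(ia)=\mu_g(0)=0$, hence $\theta_g(ia)\in\Ker(\gamma\pi)=\Ker\pi=iA$ (using that $\gamma$ is a monomorphism and the top row is exact at $E_0$). Therefore $\theta^\ast_g:=i^{-1}\theta_g i$ is a well-defined automorphism of $A$, and $g\mapsto\theta^\ast_g$ is a homomorphism $G\ri\Aut A$ because $i^{-1}(-)i$ is functorial in the obvious way and $\theta$ is already a homomorphism.

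Next I would check that $\theta^\ast$ factors through $\sigma:G\ri\Pi_0$, i.e.\ that $\theta^\ast_g=id_A$ whenever $g\in\Ker\sigma=\gamma G_0$. Write $g=\gamma\pi(e_0)$ for some $e_0\in E_0$ (possible since $\pi$ is surjective onto $G_0$ and $\gamma$ maps onto $\gamma G_0$). Then by \eqref{eq5}, $\theta_g=\theta\gamma\pi(e_0)=\mu_{e_0}$, conjugation by $e_0$ in $E_0$. Applied to $ia$ with $a\in A$: since $iA\subset ZE_0$ (the image of the center $A_0/\Ker\alpha$ is central in $E_0$ — indeed $\Ker\beta=j_0\Ker\alpha$ and $j_0A_0\subset ZB_0$ make $A$ central in $E_0$), we get $\mu_{e_0}(ia)=ia$, so $\theta^\ast_g(a)=i^{-1}(ia)=a$. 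Hence $\theta^\ast$ descends to a homomorphism $\Pi_0\ri\Aut A$, and the formula $xa=\theta^\ast_{u_x}(a)$ is independent of the choice of lift $u_x$ with $\sigma(u_x)=x$; this makes $A$ a $\Pi_0$-module with the stated action.

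The main obstacle — though a mild one — is confirming that $iA$ is central in $E_0$, which is what both the well-definedness argument (implicitly, to identify $\Ker(\gamma\pi)$ cleanly) and the factorization argument rely on. This is not entirely immediate from the hypotheses as stated for $\mathcal E$, but it does follow for $E_0$: we have $A\cong A_0/\Ker\alpha$ sitting inside $E_0=B_0/\Ker\alpha$, and $A_0=j_0A_0\subset ZB_0$ by hypothesis, so the image of $A_0$ in the quotient $B_0/\Ker\alpha$ is central. I would state this observation explicitly at the start of the proof. The remaining verifications — that $i^{-1}\theta_g i$ is a group automorphism and that $g\mapsto\theta^\ast_g$ respects composition — are routine diagram chases using only that $i$ is injective and $\theta$ is a homomorphism, so I would dispatch them in a sentence each.
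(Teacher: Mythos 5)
Your proposal is correct and follows essentially the same route as the paper: use the crossed module axioms from Theorem \ref{dl2} to see that each $\theta_g$ preserves $\Ker\pi=iA$ (hence induces $\theta^\ast_g\in\Aut A$), and then verify that the resulting action descends to $\Pi_0$. You actually carry out the verification that $\theta^\ast$ kills $\gamma G_0$ via \eqref{eq5} and the centrality of $iA$ in $E_0$, a step the paper only asserts ``we can check,'' so your write-up is a more detailed version of the same argument.
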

\begin{proof}
By Theorem  \ref{dl2}, the quadruple  $(E_0,G,\gamma\pi, \theta) $   is a crossed module. Then, it is easy to see that if $e_0\in \Ker(\gamma\pi)=\Ker\pi$, then
$\theta_g(e_0)\in \Ker\pi$, and hence for any  $g\in G$, the restriction of  $\theta_g$ to $\Ker\pi$ is an endomorphism of   $
\Ker\pi$. Since $iA=\Ker\pi$, each  such endomorphism  also induces an endomorphism of $A$.

We now can check that the correspondence  $\Pi_0\ri \Aut
A,x\mapsto\theta^\ast_{u_x} $, is a homomorphism. Therefore, $A$  is a $\Pi_0$-module with action
\begin{equation*}  xa =i^{-1}\theta_{u_x}(ia)=\theta^*_{u_x}(a) \label{9}, \end{equation*}
as required.
\end{proof}

\section{\bf {\bf \em{\bf Obstructions of  $(\alpha,\gamma)$ -prolongations}}}
\vskip 0.4 true cm

Given a diagram of group homomorphisms
\begin{align*}   \begin{diagram}
\xymatrix{ \mathcal E_0:&0\ar[r]&A_0\ar[r]^{j_0}\ar[d]^{ \alpha }&B_0\ar[r]^{p_0}&G_0\ar[r]\ar[d]^\gamma&0 ,\\
&&A&&G }
\end{diagram}
\end{align*}
where the row is exact,  $j_0A_0\subset ZB_0$, $\gamma$ is a normal monomorphism, $ \alpha $ is an epimorphism, and a group homomorphism 
 $\theta : G\rightarrow \Aut( B_0/\Ker\alpha)$
such that the quadruple  $(B_0/\Ker\alpha,G,\gamma\pi, \theta)$ is a crossed module. These data denoted by the triple  $(\alpha,\gamma,\theta)$ is said to be the \emph{pre-prolongation} of  $\mathcal E_0$. An
 $(\alpha,\gamma)$-prolongation of   $\mathcal E_0$ inducing $\theta$   is called a \emph{covering} of the pre-prolongation  $(\alpha,\gamma,\theta)$.

The ``prolongation problem'' is that of finding whether there is any covering of  the pre-prolongation  $( \alpha,\gamma,\theta)$ of $\mathcal E_0$ and, if so, how many.

First, we show an obstruction of an   $(\alpha,\gamma)$-prolongation. For any  $x\in\Pi_0$, choose a representative $u_x$ in $G$
such that  $\sigma(u_x)=x,$ in particular, choose  $u_1=0$.  This set of representatives yields a factor set
$f(x,y)\in \gamma G_0,$ that is,
$$u_x+u_y=f(x,y)+u_{xy},\;\forall x,y\in \Pi_0.$$
Because $u_1=0$, $f(x,y)$ satisfies the normalized condition $f(x,1)=f(1,y)=0$.

The associativity of the operation in  $G$ implies
\begin{equation}\mu_{u_x}f(y,z)+f(x,yz)
=f(x,y)+f(xy,z),\label{eq2}
\end{equation}
where  $\mu_{u_x}$ is the inner automorphism of  $G$ given by conjugation with $u_x$.

The given homomorphism $\theta$ induces the homomorphism
$\varphi:\Pi_0\stackrel{u}{\ri}\;G\stackrel{\theta}{\ri}\Aut E_0,$
that is,
\begin{equation}\varphi(x)=\theta_{u_x}.\label{0}
\end{equation}

 Hereafter, we refer to  $u_x, f(x,y),\varphi(x)$ as before.

 Now, we choose   $h(x,y)\in E_0$  such that
\begin{equation}
\gamma\pi [h(x,y)]=f(x,y), \label{12}
\end{equation}
 in particular, choose  $h(x,1)=h(1,y)=0$.
Thus,
$$\mu_{u_x}f(y,z)\stackrel{\eqref{12}}{=}\mu_{u_x}[\gamma\pi h(y,z)]\stackrel{\eqref{eq6}}{=}  \gamma\pi\theta_{u_x}(h(y,z))\stackrel{\eqref{0}}{=}\gamma\pi\varphi(x)h(y,z).
$$
 Take inverse image in  $E_0$ for two sides of the equation
\eqref{eq2}  via the homomorphism  $\gamma\pi:E_0\ri G$, we obtain
\begin{equation}\varphi(x)h(y,z)+
h(x,yz)=h(x,y)+h(xy,z)+k(x,y,z),\label{eq8}
\end{equation}
where $k(x,y,z)\in \Ker(\gamma\pi)=\Ker\pi=A\subset ZE_0$.

The relation  \eqref{eq8}  can be formally written  in the form  $k=\delta h$,
even though  $E_0$ is non-abelian.

\begin{lem} The function  $k$ given by  \eqref{eq8}  is a 3-cocycle in
$Z^3(\Pi_0,A)$.
\end{lem}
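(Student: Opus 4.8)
The plan is to establish the inhomogeneous $3$-cocycle identity
\[
x\cdot k(y,z,t)-k(xy,z,t)+k(x,yz,t)-k(x,y,zt)+k(x,y,z)=0,\qquad x,y,z,t\in\Pi_0,
\]
by computing one and the same element of $E_0$, namely $\varphi(x)\varphi(y)h(z,t)$, in two different ways and comparing the two results. The normalization $k(x,1,z)=k(1,y,z)=k(x,y,1)=0$, which is forced by $u_1=0$ and $h(x,1)=h(1,y)=0$ together with \eqref{eq8}, then gives $k\in Z^3(\Pi_0,A)$.

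First I would record two auxiliary facts. Since $\theta$ is a homomorphism, $u_x+u_y=f(x,y)+u_{xy}$ and $f(x,y)=\gamma\pi\,h(x,y)$, relation \eqref{eq5} (that $\theta(\gamma\pi)=\mu$) yields the multiplicativity defect
\[
\varphi(x)\varphi(y)=\theta_{u_x}\theta_{u_y}=\theta_{f(x,y)}\,\theta_{u_{xy}}=\mu_{h(x,y)}\,\varphi(xy);
\]
also $\varphi(x)=\theta_{u_x}$ restricts on $A=\Ker\pi\subset Z E_0$ to the operator $a\mapsto xa$, and every value of $k$ lies in the central subgroup $A$. In the first computation I expand $\varphi(x)\varphi(y)h(z,t)$ from the inside: apply \eqref{eq8} to rewrite $\varphi(y)h(z,t)$, then apply the automorphism $\varphi(x)$ (which distributes over the group operation of $E_0$ and sends the central term $k(y,z,t)$ to $x\cdot k(y,z,t)$), and finally apply \eqref{eq8} three more times, to $\varphi(x)h(y,z)$, $\varphi(x)h(yz,t)$ and $\varphi(x)h(y,zt)$. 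Collecting the $h$-terms and pulling the central $k$-terms to the right, the $h$-part telescopes to $h(x,y)+h(xy,z)+h(xyz,t)-h(xy,zt)-h(x,y)$ and the leftover central part is $k(x,y,z)+k(x,yz,t)-k(x,y,zt)+x\cdot k(y,z,t)$. In the second computation I use the displayed formula for $\varphi(x)\varphi(y)$ to get $\varphi(x)\varphi(y)h(z,t)=\mu_{h(x,y)}\big(\varphi(xy)h(z,t)\big)$ and then apply \eqref{eq8} once, to $\varphi(xy)h(z,t)$; this produces the same $h$-part $h(x,y)+h(xy,z)+h(xyz,t)-h(xy,zt)-h(x,y)$ together with the single central term $k(xy,z,t)$.

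Since $E_0$ is a group, the two $h$-parts are literally equal (they reduce to the same word by the group axioms alone, without invoking any further identity), so the two central parts must coincide; rearranging that equality gives precisely the $3$-cocycle identity above. The step I expect to be the main obstacle is the non-abelian bookkeeping: $E_0$ is not commutative and the automorphisms $\varphi(x)$ do not commute with the conjugations $\mu_{h(x,y)}$, so one must track signs and positions carefully while moving the central $k$-values past the $h$-values and while reconciling the three separate uses of \eqref{eq8} in the first route with the single conjugation $\mu_{h(x,y)}$ hiding them in the second. What makes everything close up is exactly that all values of $k$ are central — so their order is immaterial — together with the multiplicativity defect $\varphi(x)\varphi(y)=\mu_{h(x,y)}\varphi(xy)$.
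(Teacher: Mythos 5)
Your proof is correct, but it follows a genuinely different route from the paper's. The paper does not compute at all: it observes that \eqref{eq5} gives $(\theta\gamma)G_0=\mu E_0$, hence $\theta$ descends to a homomorphism $\psi:\Pi_0\to \mathrm{AutExt}\,E_0$, so that $(\Pi_0,E_0,\psi)$ is an abstract kernel whose obstruction, built from exactly the data $u_x$, $f$, $h$ of \eqref{eq2}--\eqref{eq8}, is $k$; the cocycle identity is then quoted from Lemma 8.4 of Chapter IV of Mac Lane's \emph{Homology}, and the only thing checked locally is that $k$ lands in $A$. What you do instead is unpack that cited lemma: you verify the identity directly by evaluating $\varphi(x)\varphi(y)h(z,t)$ in two ways, using the multiplicativity defect $\varphi(x)\varphi(y)=\mu_{h(x,y)}\varphi(xy)$ (which you derive correctly from \eqref{eq5} and \eqref{12}, and which the paper itself only records later as \eqref{tp}) and three versus one applications of \eqref{eq8}; the telescoping of the $h$-word and the centrality of the $k$-values then force the $3$-cocycle relation, and the normalization follows from $u_1=0$ and $h(x,1)=h(1,y)=0$. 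I checked the bookkeeping: both routes produce the common $h$-word $h(x,y)+h(xy,z)+h(xyz,t)-h(xy,zt)-h(x,y)$, with residual central parts $x\cdot k(y,z,t)+k(x,yz,t)-k(x,y,zt)+k(x,y,z)$ and $k(xy,z,t)$ respectively, which is exactly the cocycle identity. Your version buys self-containedness and makes visible where centrality of $A$ and the crossed-module relation \eqref{eq6} are actually used; the paper's version buys brevity and places the lemma in the standard obstruction-theoretic framework, at the cost of the reader having to match the present construction of $k$ with Mac Lane's.
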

\begin{proof}
Consider the following commutative diagram
\[\divide\dgARROWLENGTH by 2\begin{diagram}
\node{0}\arrow{e}\node{G_0}\arrow{e,t}{\gamma}
\node{G}\arrow{s,r}{\theta}\arrow{e,t}{\sigma}\node{\Pi_0}\arrow{e}\node{1 ,}\\
\node[3]{\Aut E_0}
\end{diagram}\]
by the condition  $(\ref{eq5})$, $(\theta \gamma) G_0=\mu E_0$. Then, there is a homomorphism  $\psi:\Pi_0\ri$ AutExt$E_0$ making the following diagram
\begin{align*}   \begin{diagram}
\xymatrix{0\ar[r]&G_0\ar[r]^{\gamma}&G\ar[r]^\sigma \ar[d]^{ \theta  }&\Pi_0\ar[r]\ar[d]^\psi&1\\
& &\Aut E_0\ar[r]^\nu&\mathrm{AutExt}E_0\ar[r]&1 }
\end{diagram}
\end{align*}
commute, where  $\nu$ is the natural projection. Thus, $k$ is just an obstruction of the abstract kernel
$(\Pi_0,E_0,\psi)$. Due to Lemma  8.4 (\cite{ML1} -Chapter IV),  $k$ is a 3-cocycle of  $B(Z\Pi_0)$. Moreover,  because of  its construction,  $k$ takes values in $A$, as required.
\end{proof}
\begin{lem}\label{bd2}
For given representatives  $u_x$ in $G$, a change in the choice of
$h:\Pi_0^2\ri E_0$ replaces $k$ by a cohomologous cocycle. Moreover, by suitably changing the choice of $h$, $k$ may be replaced by any cohomologous cocycle.
\end{lem}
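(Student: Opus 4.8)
The plan is to fix the given normalized lift $h$ and to compare its associated $3$-cocycle $k$ with the $3$-cocycle $k'$ produced by an arbitrary second normalized lift $h'$, i.e. one with $\gamma\pi[h'(x,y)]=f(x,y)$ and $h'(x,1)=h'(1,y)=0$. First I would note that $h(x,y)$ and $h'(x,y)$ have the same image under $\gamma\pi$, so that
$$g(x,y):=h'(x,y)-h(x,y)\in\Ker(\gamma\pi)=\Ker\pi=A\subset ZE_0 ,$$
and that $g$ is a \emph{normalized} $2$-cochain on $\Pi_0$ with values in $A$, since $h$ and $h'$ are normalized. The fact that the values of $g$ lie in $ZE_0$ is precisely what will allow me to shuffle and cancel these terms freely inside the (non-abelian) group $E_0$.

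Next I would substitute $h'=h+g$ into the defining relation \eqref{eq8} for $k'$. Because $\varphi(x)$ is an automorphism of $E_0$ one has $\varphi(x)h'(y,z)=\varphi(x)h(y,z)+\varphi(x)g(y,z)$, and by the Corollary the restriction of $\varphi(x)=\theta_{u_x}$ to $A=\Ker\pi$ is exactly the $\Pi_0$-module action $a\mapsto xa$. Expanding both sides of \eqref{eq8} written for $h'$, using that every $g$-term is central to pull it outside the products, and then subtracting \eqref{eq8} written for $h$, all the $h$-terms cancel and what remains is an identity in the abelian group $A$:
$$k'(x,y,z)=k(x,y,z)+x\,g(y,z)-g(xy,z)+g(x,yz)-g(x,y) .$$
The correction term here is exactly $(\delta g)(x,y,z)$ for the coboundary $\delta\colon C^2(\Pi_0,A)\ri C^3(\Pi_0,A)$, so $k'=k+\delta g$ and $k,k'$ are cohomologous; this gives the first assertion.

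For the converse I would run the same computation in reverse. Given any normalized $2$-cochain $g\in C^2(\Pi_0,A)$, set $h'(x,y):=h(x,y)+g(x,y)$, with $g(x,y)$ regarded as an element of $A=\Ker\pi\subset E_0$. Then $\gamma\pi[h'(x,y)]=\gamma\pi[h(x,y)]=f(x,y)$ and $h'$ is normalized, so $h'$ is an admissible choice, and by the displayed identity its associated cocycle is $k+\delta g$. Since a $3$-cocycle is cohomologous to $k$ exactly when it differs from $k$ by $\delta g$ for some \emph{normalized} $2$-cochain $g$ (the standard fact that the normalized complex computes $H^{\ast}(\Pi_0,A)$, cf. \cite{ML1}), every cocycle cohomologous to $k$ arises in this way.

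The expansion of \eqref{eq8} is routine; the single point requiring care — and the real obstacle — is the non-commutativity of $E_0$: one must verify that each term moved across a product or cancelled really lies in $ZE_0$ (this is why it is essential that $A=\Ker\pi\subset ZE_0$ and that $g$ is $A$-valued), and that the normalization is preserved throughout, so that the cochain $g$ appearing in the last step is genuinely a normalized $2$-cochain.
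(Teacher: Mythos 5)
Your argument is correct and is essentially the proof the paper intends: the paper simply cites Lemma 8.5 of Mac Lane's \emph{Homology} (Ch.~IV) with the central subgroup there replaced by $A$, and your computation — writing $h'=h+g$ with $g$ a normalized $A$-valued $2$-cochain, using $A=\Ker\pi\subset ZE_0$ and the fact that $\varphi(x)$ preserves $\Ker\pi$ to cancel the $h$-terms and obtain $k'=k+\delta g$, then reversing the construction for the second assertion — is exactly that argument written out in full. No gaps.
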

\begin{proof}
In the proof of Lemma  8.5 (\cite{ML1} -Chapter IV), replacing the functions with values in the central  $C$ by those in
$A$, we obtain the proof of Lemma
\ref{bd2}.
\end{proof}
\begin{lem}
A change in the choice of  $u_x$ in $G$  may be followed by a suitable new selection of  $h$ satisfying \eqref{12}  such as to leave the function $k$ unchanged.
\end{lem}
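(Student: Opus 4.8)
The plan is, given a second system of representatives $u'_x$ (still normalized by $u'_1=0$), to write down an explicit new lift $h'$ satisfying \eqref{12} for the corresponding factor set, and then to check that the $3$-cochain it produces through \eqref{eq8} coincides on the nose with the original $k$.

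First I would record how the auxiliary data change. Since $\sigma(u'_x)=x=\sigma(u_x)$, there is a unique $t_x\in G_0$ with $u'_x=u_x+\gamma(t_x)$, and $t_1=0$; as $\pi\colon E_0\ri G_0$ is onto, choose $\ell_x\in E_0$ with $\pi(\ell_x)=t_x$ and $\ell_1=0$, so that $\gamma\pi(\ell_x)=\gamma(t_x)$. The automorphism attached to the new representatives is $\varphi'(x):=\theta_{u'_x}=\theta_{u_x}\,\theta_{\gamma(t_x)}$, and because $\gamma(t_x)=(\gamma\pi)(\ell_x)$, relation \eqref{eq5} gives $\theta_{\gamma(t_x)}=\mu_{\ell_x}$; hence $\varphi'(x)=\varphi(x)\circ\mu_{\ell_x}$.

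Next I would compute the new factor set $f'$, defined by $u'_x+u'_y=f'(x,y)+u'_{xy}$: conjugating each $\gamma(t_\bullet)$ past the neighbouring $u_\bullet$ and inserting $u_x+u_y=f(x,y)+u_{xy}$ leads to an identity of the shape $f'(x,y)=\mu_{u_x}(\gamma t_x)+f(x,y)+\mu_{u_{xy}}(\gamma t_y-\gamma t_{xy})$. Using $f=\gamma\pi h$ from \eqref{12} together with $\mu_{u_x}\gamma\pi=\gamma\pi\,\varphi(x)$ (from \eqref{eq6} and the definition \eqref{0} of $\varphi$), this becomes $f'(x,y)=\gamma\pi\bigl(h'(x,y)\bigr)$ with
\[ h'(x,y):=\varphi(x)(\ell_x)+h(x,y)+\varphi(xy)(\ell_y)-\varphi(xy)(\ell_{xy}). \]
By construction $h'$ satisfies \eqref{12} for $f'$, and the normalizations $\ell_1=0$, $h(x,1)=h(1,y)=0$, $\varphi(1)=\mathrm{id}$ force $h'(x,1)=h'(1,y)=0$.

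It then remains to insert $\varphi'$ and $h'$ into \eqref{eq8} — that is, to compute the cochain $k'$ determined by $\varphi'(x)h'(y,z)+h'(x,yz)=h'(x,y)+h'(xy,z)+k'(x,y,z)$ — and to verify $k'=k$. Expanding both sides, using that $\varphi(g)$, $\mu_{\ell_x}$ and $\gamma\pi$ are homomorphisms and rewriting the conjugations by \eqref{eq5} and \eqref{eq6}, one finds every term carrying an $\ell_\bullet$ occurring in a cancelling pair — here one uses decisively that the discrepancy $k'$ lies in $A=\Ker\pi\subset ZE_0$, hence is central and may be commuted past everything — so that what survives is precisely relation \eqref{eq8} for $h$, giving $k'=k$. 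This last identity is the only substantial point, and I expect it to be the main obstacle: it is a rather long non-abelian computation in $E_0$ in which the order of the conjugations has to be tracked carefully, the centrality of $A$ being what makes it collapse. It runs parallel, with the central subgroup there replaced by $A$, to the corresponding step in Mac Lane's treatment of the obstruction of an abstract kernel (\cite{ML1}, Chapter~IV, \S8), from which it could equally well be quoted.
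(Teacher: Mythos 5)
Your proposal follows essentially the same route as the paper: write the new representatives as the old ones corrected by an element of $\gamma G_0$, lift that correction to $E_0$, exhibit an explicit $h'$ satisfying \eqref{12} for the new factor set, and verify via the Mac Lane-type computation that the resulting $3$-cochain is literally $k$. Although you place the correction on the right ($u'_x=u_x+\gamma t_x$) where the paper places it on the left ($u'_x=g_x+u_x$), your $h'(x,y)=\varphi(x)(\ell_x)+h(x,y)+\varphi(xy)(\ell_y)-\varphi(xy)(\ell_{xy})$ coincides with the paper's $h'(x,y)=t_x+\theta_{u_x}(t_y)+h(x,y)-t_{xy}$ under the identification $t_x=\varphi(x)(\ell_x)$ together with \eqref{tp}, and both you and the paper leave the final cancellation at the same level of detail, referring it to Mac Lane's treatment.
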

\begin{proof}
If $u_x$ is replaced by  $u'_x$ such that  $u'_1=0$, then $u'_x=g_x+u_x$,
where  $g:\Pi_0\ri \gamma G_0$ satisfies $g_1=0$. Thus, there is a function $t:\Pi_0\ri E_0$ such that  $\gamma\pi(t_x)=g_x$.

Now, one determines a function  $h':\Pi^2_0\ri E_0$, given by
\begin{equation} h'(x,y)={t}_x+\theta _{u_x}({t}_y)+h(x,y)-{t}_{xy}. \label{eq9}
\end{equation}

Thanks to the condition  $\eqref{eq6} $, it is easy to check that
$$\gamma \pi h'(x,y)=u'_x+u'_y-u'_{xy}=f'(x,y).$$
Hence,  $h'(x,y)$ is just a factor set of  $B$ induced by the representatives  $u'_x$ in $G$. Thanks to $(\ref{eq5})$ and \eqref{eq9}, we can transform $\varphi (x)[h'(y,z)]+h'(x,yz)$ into
$h'(x,y)+h'(xy,z)+k(x,y,z)$. This proves that  $k$ is unchanged.
\end{proof}
From the above proved lemmas, we obtain the following proposition.
\begin{prop}\label{dl1}
For any triple  $(\alpha,\gamma,\theta)$, the cohomology class
$[k]\in H^3(\Pi_0,A)$, where   $k$  is given by  \eqref{eq8}, does not depend on the choice of the representatives  $u_x$   and the factor set  $h(x,y)$.
\end{prop}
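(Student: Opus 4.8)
The plan is to assemble the three preceding lemmas rather than to produce any new computation. Suppose we are given two admissible systems of choices: representatives $u_x$ in $G$ with $\sigma(u_x)=x$, $u_1=0$, together with a lifting $h:\Pi_0^2\ri E_0$ of the resulting factor set $f$ as in \eqref{12}, producing the cocycle $k$; and a second system $u'_x$ (again normalized, $u'_1=0$) with a lifting $h':\Pi_0^2\ri E_0$ of the corresponding $f'$, producing $k'$. Since the construction of the representatives always permits the normalization $u_1=0$, there is no loss in assuming both systems are normalized. The goal is to show $[k]=[k']$ in $H^3(\Pi_0,A)$.

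First I would use the last lemma to eliminate the difference between the two sets of representatives. Applied to the passage from $u_x$ to $u'_x$ (with $g_x$ and $t_x$ as in its proof), that lemma produces, starting from $u'_x$ and $h'$, a lifting $\bar h:\Pi_0^2\ri E_0$ attached to the \emph{original} representatives $u_x$ and satisfying $\gamma\pi\bar h(x,y)=f(x,y)$, such that the $3$-cochain built from $(u_x,\bar h)$ via \eqref{eq8} coincides with $k'$ identically. After this step, $k$ and $k'$ are both obtained from one and the same system of representatives $u_x$, differing only in the choice of the lifting of the factor set $f$ into $E_0$, namely $h$ versus $\bar h$.

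Then I would invoke Lemma \ref{bd2}: with the representatives $u_x$ fixed, any two choices of the lifting $h$ satisfying \eqref{12} give cohomologous cocycles. Hence $k$ and $k'$ are cohomologous in $Z^3(\Pi_0,A)$, so $[k]=[k']$; combined with the earlier lemma that $k\in Z^3(\Pi_0,A)$ to begin with, this shows that the class $[k]\in H^3(\Pi_0,A)$ attached to the triple $(\alpha,\gamma,\theta)$ is well defined. The only point needing a line of care — the mild ``obstacle'' here — is the bookkeeping with normalizations: one should check that the intermediate lifting $\bar h$ still meets the normalized condition $\bar h(x,1)=\bar h(1,y)=0$ implicitly used when comparing it with $h$ through Lemma \ref{bd2}, which follows from $t_1=0$ and the normalization of $h'$ and $g$. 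No genuinely new argument is required.
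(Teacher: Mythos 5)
Your proposal is correct and follows essentially the same route as the paper, which simply derives the proposition by combining the three preceding lemmas (the paper's entire ``proof'' is the sentence ``From the above proved lemmas, we obtain the following proposition''). Your explicit bookkeeping --- first using the last lemma to transfer to a common system of representatives, then applying Lemma~\ref{bd2}, with the remark on normalization --- just spells out what the paper leaves implicit.
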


The cohomology class  $[k]\in H^3(\Pi_0,A)$ is called an \emph{obstruction} to an
 $( \alpha ,\gamma)$-prolongation, and we denote   $[k]=\mathrm{Obs}( \alpha ,\gamma,\theta)$.

\begin{thm}\label{dl8}
An extension  $\mathcal E_0$ has an  $( \alpha ,\gamma)$-prolongation if and only if
$\mathrm{Obs}( \alpha ,\gamma,\theta)$ vanishes in
$H^3(\Pi_0,A).$
\end{thm}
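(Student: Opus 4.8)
Since a pre-prolongation fixes $\theta$, I read the statement as: the pre-prolongation $(\alpha,\gamma,\theta)$ admits a covering if and only if $\mathrm{Obs}(\alpha,\gamma,\theta)=0$ in $H^3(\Pi_0,A)$. The plan is to prove the two implications separately: necessity by reading a $3$-cocycle off a given covering, and sufficiency by an explicit crossed-product construction of a covering.

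For \emph{necessity}, suppose $\mathcal E:0\to A\xrightarrow{j}B\xrightarrow{p}G\to 0$ is a covering of $(\alpha,\gamma,\theta)$, with $\varepsilon$ and $\beta$ as in Lemma \ref{bd1} and \eqref{eq16}. Since $p$ is surjective, I lift each representative $u_x$ to some $v_x\in B$ with $p(v_x)=u_x$ and $v_1=0$. The exact sequence \eqref{eq1'} then gives a normalized function $h'\colon\Pi_0^2\to E_0$ with $v_x+v_y=\varepsilon h'(x,y)+v_{xy}$; applying $p$ and using $p\varepsilon=\gamma\pi$ from \eqref{eq16} together with $u_x+u_y=f(x,y)+u_{xy}$ yields $\gamma\pi h'(x,y)=f(x,y)$, so $h'$ is an admissible choice for the function $h$ of \eqref{eq8}. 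Because $\mathcal E$ induces $\theta$, the automorphism of $E_0$ determined by $v_x$ through \eqref{eq2'} is $\phi_{v_x}=\theta_{p(v_x)}=\theta_{u_x}=\varphi(x)$ by \eqref{eq4} and \eqref{0}. Expanding the associativity identity $(v_x+v_y)+v_z=v_x+(v_y+v_z)$ in $B$, conjugating the term $v_x+\varepsilon h'(y,z)-v_x$, and applying the (injective) homomorphism $\varepsilon^{-1}$ on the image of $\varepsilon$, I obtain $\varphi(x)h'(y,z)+h'(x,yz)=h'(x,y)+h'(xy,z)$; that is, the $3$-cocycle $k$ of \eqref{eq8} attached to the choice $h=h'$ is identically zero. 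By Proposition \ref{dl1}, $\mathrm{Obs}(\alpha,\gamma,\theta)=[k]$ does not depend on any choice, hence it vanishes.

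For \emph{sufficiency}, assume $\mathrm{Obs}(\alpha,\gamma,\theta)=0$. By Lemma \ref{bd2} I may choose $h$ so that the cocycle $k$ of \eqref{eq8} is identically zero, that is
\[
\varphi(x)h(y,z)+h(x,yz)=h(x,y)+h(xy,z)\qquad (x,y,z\in\Pi_0).
\]
I then put $B=E_0\times\Pi_0$ with multiplication $(e,x)+(e',x')=\bigl(e+\varphi(x)(e')+h(x,x'),\,xx'\bigr)$ and identity $(0,1)$, and define $\varepsilon(e)=(e,1)$, $j(a)=(ia,1)$, $p(e,x)=\gamma\pi(e)+u_x$, $\beta(b_0)=(b_0+\Ker\alpha,1)$. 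The verifications are bookkeeping. Associativity of $B$ reduces, after using that $\theta$ is a homomorphism together with $u_x+u_{x'}=f(x,x')+u_{xx'}$, $f=\gamma\pi h$ and \eqref{eq5} (which combine to $\varphi(x)\varphi(x')=\mu_{h(x,x')}\circ\varphi(xx')$ on $E_0$), precisely to the cocycle identity $k=0$ displayed above; the normalizations $h(x,1)=h(1,y)=0$ and $\varphi(1)=\theta_0=\mathrm{id}$ make $(0,1)$ a two-sided identity. The map $p$ is a homomorphism by \eqref{eq6} in the form $\gamma\pi\theta_g=\mu_g\circ\gamma\pi$; it is onto since $g-u_{\sigma(g)}\in\gamma G_0=\gamma\pi(E_0)$, and $\Ker p=\{(ia,1):a\in A\}=jA$, so $0\to A\xrightarrow{j}B\xrightarrow{p}G\to 0$ is exact. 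Finally $\beta$ (the composite of the projection $B_0\to E_0$ with $\varepsilon$) is a homomorphism with $\beta j_0=j\alpha$ and $p\beta=\gamma p_0$ by \eqref{eq16}, while the conditions $j_0A_0\subset ZB_0$, $\gamma$ a normal monomorphism and $\alpha$ an epimorphism belong to the data of the pre-prolongation; hence $\mathcal E$ is an $(\alpha,\gamma)$-prolongation of $\mathcal E_0$.

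It then remains to see that $\mathcal E$ is a \emph{covering} of $(\alpha,\gamma,\theta)$, i.e. that the homomorphism $G\to\Aut E_0$ attached to $\mathcal E$ by Theorem \ref{dl2} equals the given $\theta$. From $(e,x)=\varepsilon(e)+(0,x)$ and \eqref{eq2'} one gets $\phi_{(e,x)}=\mu_e\circ\phi_{(0,x)}$, and a direct computation of conjugation by $(0,x)$ in $B$ gives $\phi_{(0,x)}=\varphi(x)=\theta_{u_x}$; hence $\phi_{(e,x)}=\theta_{\gamma\pi(e)}\circ\theta_{u_x}=\theta_{p(e,x)}$ by \eqref{eq5}, \eqref{0} and the definition of $p$, which is the defining relation \eqref{eq4} of the induced homomorphism. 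The one genuinely delicate point is the associativity computation inside the crossed product: the non-abelian terms $\varphi(x)\varphi(x')$, $h(x,x')$ and the values of $h$ at products have to be rearranged so that the crossed-module relations \eqref{eq5}, \eqref{eq6} and the identity $k=0$ are used at exactly the right places; everything else is routine.
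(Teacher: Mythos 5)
Your proposal is correct and follows essentially the same route as the paper: necessity by lifting the representatives $u_x$ to $v_x\in B$ and showing the resulting choice of $h$ makes $k$ vanish identically (then invoking the independence of $[k]$ from choices), and sufficiency by normalizing $h$ so that $k=0$ and building the crossed product $E_0\times\Pi_0$ with the same maps $j$, $p$, $\beta$. Your additional check that the constructed extension actually induces the given $\theta$ (i.e.\ is a covering of the pre-prolongation) is a welcome completeness point that the paper's proof leaves implicit.
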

\begin{proof}
\emph{Necessary condition.} Let  $\mathcal E$ be an $( \alpha
,\gamma)$-prolongation of   $\mathcal E_0$ inducing $\theta$.   Recall that for the representatives  $u_x$ in $G$,  we have  a factor set  $f(x,y)$
satisfying the relation \eqref{eq2}. By Lemma \ref{bd1}, $B$ is an extension of  $ E_0$ by $\Pi_0$, and hence we can choose the  representatives $v_x$ in $B$, $x\in \Pi_0$,  such that  $p(v_x)=u_x$. This set of representatives gives a  factor set
$\varepsilon h,$   where $h:\Pi^2_0\ri E_0$, that is,
$$\varepsilon h(x,y)=v_x+v_y-v_{xy}.$$
Then,
$$\gamma\pi h(x,y)\stackrel{\eqref{eq16}}{=} p\varepsilon
h(x,y)=p(v_x+v_y-v_{xy})=u_x+u_y-u_{xy}=f(x,y),$$ that is,  $h$ satisfies \eqref{12}.

Since $\varepsilon h$ is a factor set of the extension $B$ corresponding to the representatives  $v_x$, we obtain
\begin{equation*}\mu_{v_x}[\varepsilon h(y,z)]+
\varepsilon h(x,yz)=\varepsilon h(x,y)+\varepsilon
h(xy,z).\label{eq8'}
\end{equation*}
We need to turn the above equality into the equality  (\ref{eq8}) to determine the function  $k$. Thanks to the monomorphic property of $\varepsilon$ and the relation
\begin{align*}  \mu_{v_x}[\varepsilon h(y,z)]\stackrel{\eqref{eq2'}}{=}\varepsilon\phi_{v_x} h(y,z)
\stackrel{\eqref{eq4}}{=}\varepsilon\theta_{u_x}
h(y,z)\stackrel{\eqref{0}}{=}\varepsilon\varphi(x) h(y,z), \end{align*}
the above equality becomes
\begin{equation}\varphi(x)  h(y,z)+
 h(x,yz)= h(x,y)+
h(xy,z).\label{01}
\end{equation}
According to the determination of  $k$ in
 \eqref{eq8},  we deduce that  $[k]=0.$

\emph{Sufficient condition.} Conversely, let  $\mathrm{Obs}( \alpha
,\gamma,\theta)=0$ in $H^3(\Pi_0,A)$, that is,
$$ k=\delta l,\; l: \Pi^2_0\rightarrow  A.$$
Now, for  $h'=h-l$, we obtain
\begin{align*} & k'=\delta h'=\delta h-\delta l=k-k=0.
\end{align*}
This means that one can choose  $h: \Pi^2_0\rightarrow E_0$ such that
$[k]=0$ in $Z^3(\Pi_0,A)$.
Then, the relation  \eqref{eq8}   becomes
$\delta h=0.$

According to the relations  \eqref{eq5} and \eqref{12}, the function  $\varphi$ given by   \eqref{0} satisfies
\begin{align}\varphi(x)\varphi(y)=\mu_{h(x,y)}\varphi(xy).\label{tp}\end{align}

Clearly, $\varphi(1)=id_{E_0}$. Thus, we can construct the crossed product
$B_h=[E_0,\varphi ,h,\Pi_0]$, that is, $B_h=E_0\times \Pi_0$ under the operation
\begin{align*} (e_0,x)+(e_0',y)=(e_0+\varphi (x)e_0'+h(x,y),xy), \end{align*}
and there is an exact sequence
\begin{align*}0\ri A \xrightarrow{j'}B_h\xrightarrow{p'} G  \rightarrow 0, \end{align*}
where $$   j'(a)=(ia,1),\;\ p'(e_0,x)=\gamma\pi e_0+u_x.$$
Moreover, the correspondence  $\beta': B_0 \rightarrow B_h $ given by
\begin{equation}\label{eq17}
\beta'(b_0)= (\overline{b}_0,1)
\end{equation}
is a group homomorphism.

Now, it is easy to check that the following diagram
\[\begin{diagram}
\xymatrix{ \mathcal E_0:&0\ar[r]&A_0\ar[r]^{j_0}\ar[d]^{ \alpha }&B_0\ar[r]^{p_0}\ar[d]^{\beta'}&G_0\ar[r]\ar[d]^\gamma&0\\
\mathcal E_h:&0\ar[r]&A\ar[r]^{j'}&B_h\ar[r]^{p'}&G\ar[r]&0 }
\end{diagram}\]
commutes. Therefore, $\mathcal E_h $ is an $( \alpha ,\gamma)$-prolongation of
$\mathcal E_0$. This completes the proof.
\end{proof}

\section{\bf {\bf \em{\bf Classification theorem}}}
\vskip 0.4 true cm

\begin{defn}
\emph{Two $( \alpha ,\gamma)$-prolongations of   $\mathcal E_0$
\begin{equation*}  0\rightarrow A\xrightarrow{j}B\xrightarrow{p}G\rightarrow 0,    \end{equation*}
\begin{equation*}0\rightarrow A\xrightarrow{j'}B'\xrightarrow{p'} G\rightarrow 0   \end{equation*}
are said to be \emph{equivalent} if there is a morphism of exact sequences
\[\begin{diagram}
\xymatrix{ 0\ar[r]&A\ar[r]^{j}\ar@{=}[d]&B\ar[r]^{p}\ar[d]
^{\beta^*}&  G \ar[r] \ar@{=}[d]& 0, \;\;\;&B_0\ar[r]^{\beta}&B \\
0\ar[r]&A\ar[r]^{j'}&B'  \ar[r]^{p'}&G \ar[r]& 0, \;\;\;&B_0\ar[r]^{\beta'}& B' }
\end{diagram}\]
such that  $\beta^*\beta=\beta'.$}
\end{defn}
\begin{lem}\label{bd9}
Any $( \alpha ,\gamma)$-prolongation of   $\mathcal E_0$ is equivalent to a crossed product extension.
 \end{lem}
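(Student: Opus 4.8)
The plan is to take an arbitrary $(\alpha,\gamma)$-prolongation $\mathcal E:0\to A\xrightarrow{j}B\xrightarrow{p}G\to 0$ of $\mathcal E_0$ (with induced $\theta$) and produce an explicit equivalence with the crossed product extension $\mathcal E_h$ constructed in the proof of Theorem \ref{dl8}. By Lemma \ref{bd1}, $B$ is an extension of $E_0$ by $\Pi_0$, so I would first choose representatives $v_x\in B$ with $p(v_x)=u_x$ and $v_1=0$, yielding the factor set $\varepsilon h(x,y)=v_x+v_y-v_{xy}$ where $h:\Pi_0^2\to E_0$ satisfies \eqref{12}; this is exactly the $h$ appearing in the proof of Theorem \ref{dl8}, and (as shown there) it satisfies $\delta h=0$ precisely because $\mathrm{Obs}(\alpha,\gamma,\theta)=0$, which holds since $\mathcal E$ exists. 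Hence the crossed product $B_h=[E_0,\varphi,h,\Pi_0]$ is defined and gives the prolongation $\mathcal E_h$.

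Next I would define the comparison map $\beta^\ast:B_h\to B$ by $\beta^\ast(e_0,x)=\varepsilon(e_0)+v_x$. The key computations are: (i) $\beta^\ast$ is a group homomorphism — using $\varepsilon\varphi(x)=\varepsilon\theta_{u_x}=\mu_{v_x}\varepsilon$ (from \eqref{eq2'}, \eqref{eq4}, \eqref{0}) and $\varepsilon h(x,y)=v_x+v_y-v_{xy}$, one checks $\beta^\ast((e_0,x)+(e_0',y))=\beta^\ast(e_0,x)+\beta^\ast(e_0',y)$ directly; (ii) $\beta^\ast$ is compatible with the extension data, i.e. $\beta^\ast j'=j$ (since $j'(a)=(ia,1)$ and $\varepsilon(ia)=j(a)$ by \eqref{eq16}, with $v_1=0$) and $p\beta^\ast=p'$ (since $p(\varepsilon e_0+v_x)=\gamma\pi(e_0)+u_x=p'(e_0,x)$, again by \eqref{eq16}); (iii) the triangle on the right commutes, $\beta^\ast\beta'=\beta$, where $\beta':B_0\to B_h$ is the map \eqref{eq17}, $\beta'(b_0)=(\overline b_0,1)$ — indeed $\beta^\ast\beta'(b_0)=\varepsilon(\overline b_0)=\beta(b_0)$ by the definition of $\varepsilon$ in Lemma \ref{bd1}. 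By the five lemma (or directly), $\beta^\ast$ is then an isomorphism, so $\mathcal E\cong\mathcal E_h$ as $(\alpha,\gamma)$-prolongations.

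The main obstacle I anticipate is bookkeeping rather than anything conceptual: one must be careful that the $h$ read off from the chosen $v_x$ is genuinely a cocycle (this uses that $\mathcal E$ itself furnishes the vanishing obstruction, so I should cite the necessity direction of Theorem \ref{dl8}) and that the normalization conventions ($u_1=0$, $v_1=0$, $h(x,1)=h(1,y)=0$) are consistent throughout. A minor subtlety is verifying that $\varphi$ satisfies \eqref{tp} with this particular $h$, but this is automatic from $\delta h=0$ together with \eqref{eq5} and \eqref{12}, exactly as in the sufficiency argument of Theorem \ref{dl8}. Once the homomorphism property of $\beta^\ast$ is established, the commutativity statements and bijectivity are immediate, so the proof reduces to a single careful verification.
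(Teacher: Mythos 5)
Your proposal is correct and follows essentially the same route as the paper: choose representatives $v_x\in B$ over $u_x$, read off $h$ satisfying \eqref{12}, invoke the necessity direction of Theorem \ref{dl8} to get \eqref{01} and \eqref{tp} so that $B_h=[E_0,\varphi,h,\Pi_0]$ exists, and exhibit the isomorphism matching $\varepsilon e_0+v_x$ with $(e_0,x)$ and satisfying the compatibility with $\beta$ and $\beta'$. The only cosmetic difference is that you orient the comparison map as $B_h\to B$ while the paper writes $\beta^\ast:B\to B_h$; since it is an isomorphism, this changes nothing.
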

\begin{proof}
Let  $\mathcal E$ be an  $( \alpha , \gamma)$-prolongation of
$\mathcal E_0$ inducing $\theta: G\rightarrow \Aut E_0$.  By  the exact sequence \eqref{eq1'}, we choose representatives $v_x$ in  $B$ such that
$p(v_x)=u_x$. This set of representatives yields a function $h:\Pi_0^2\rightarrow
E_0$ satisfing  \eqref{12}.
Then, due to Theorem  \ref{dl8} the functions
$\varphi$ and $h$ satisfy the relations  (\ref{01}) and (\ref{tp}).
Thus, there is the crossed product  $B_h=[E_0,\varphi ,h,\Pi_0 ] $, and hence the crossed product  extension   $\mathcal E_h$ is an $( \alpha ,\gamma)$-prolongation of   $\mathcal E_0$. Now we show that  $\mathcal E$  is equivalent to $\mathcal E_h $.

Thanks to  the exact sequence  \eqref{eq1'}, each element of  $B$ is written in the form  $b=\varepsilon e_0+v_x $. Then, it is easy to check that the correspondence
\begin{align*}\beta^*: B\rightarrow B_h,\;\;\varepsilon e_0+v_x \mapsto (e_0,x )   \end{align*}
is an isomorphism satisfying the following commutative diagram
\[\begin{diagram}
\xymatrix{\mathcal E:0\ar[r]&A\ar[r]^{j}\ar@{=}[d]&B\ar[r]^{p}
\ar[d]^{\beta^*}&G\ar[r]\ar@{=}[d]&0,\;\;\;& B_0\ar[r]^\beta & B\\
\mathcal E_h:0\ar[r]&A\ar[r]^{j'}&B_h\ar[r]^{p'}&G\ar[r]&0,
\;\;\; &B_0 \ar[r]^{\beta'}&B_h. }
\end{diagram}\]
Finally,  for all   $b_0\in B_0$ we have
$$\beta^*\beta(b_0)= \beta^*\varepsilon
(\overline{b_0})=(\overline{b_0},1)\stackrel{\eqref{eq17} }{=}\beta'(b_0).$$
Therefore, two extensions  $\mathcal E$ and $\mathcal E_h$ are equivalent, as claimed.
\end{proof}
\begin{thm}\label{dl10}
Any  $(\alpha,\gamma)$-prolongation of   $\mathcal E_0$ is a central extension.
\end{thm}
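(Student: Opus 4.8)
The plan is to pass, via Lemma~\ref{bd9}, from an arbitrary $(\alpha,\gamma)$-prolongation to a crossed product extension, and then to read off centrality directly from the crossed product multiplication; the heart of the matter will be that the centrality of $\mathcal E_0$ forces the copy of $A$ inside $E_0$ to be central in $E_0$.

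I would first record that centrality is invariant under equivalence of $(\alpha,\gamma)$-prolongations: an equivalence is an isomorphism $\beta^\ast\colon B\to B'$ with $\beta^\ast j=j'$, so $\beta^\ast(ZB)=ZB'$ and hence $jA\subset ZB$ if and only if $j'A\subset ZB'$. By Lemma~\ref{bd9} it therefore suffices to treat a crossed product extension $\mathcal E_h\colon 0\to A\xrightarrow{j'}B_h\xrightarrow{p'}G\to 0$ with $B_h=[E_0,\varphi,h,\Pi_0]$, that is, to prove $j'A\subset ZB_h$.

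Here $j'(a)=(ia,1)$, and from $\Ker\alpha\subset A_0\subset ZB_0$ one has $iA=\Ker\pi\subset ZE_0$ (this is exactly where the hypothesis $j_0A_0\subset ZB_0$ is used). For an arbitrary $(e_0,x)\in B_h$, the crossed product law, together with $\varphi(1)=\mathrm{id}_{E_0}$ and the normalisations $h(1,x)=h(x,1)=0$, gives
\[ (ia,1)+(e_0,x)=(ia+e_0,\,x)=(e_0+ia,\,x),\qquad (e_0,x)+(ia,1)=\bigl(e_0+\varphi(x)(ia),\,x\bigr), \]
where the middle equality uses $ia\in ZE_0$. Thus $(ia,1)$ is central in $B_h$ as soon as $\varphi(x)(ia)=ia$ for every $x\in\Pi_0$. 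Since $\varphi(x)=\theta_{u_x}$ and $\theta_g$ carries $iA=\Ker(\gamma\pi)$ onto itself (as noted in the proof of the corollary), the remaining point is that this restriction is the identity; granting this, $j'A\subset ZB_h$, so $\mathcal E_h$, and hence $\mathcal E$, is a central extension.

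The main, and essentially the only, obstacle is the identity $\varphi(x)(ia)=ia$, equivalently the triviality of the induced $\Pi_0$-action on $A$: one must show that the automorphisms $\theta_{u_x}$ of $E_0$ coming from the crossed module $(E_0,G,\gamma\pi,\theta)$ act trivially on the central subgroup $iA$. This is where the hypotheses that $\mathcal E_0$ is central and that $\gamma$ is a normal monomorphism must be brought to bear; the rest of the argument is bookkeeping with the crossed product operation and the normalisations of $\varphi$ and $h$.
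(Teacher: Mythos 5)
Your reduction is exactly the paper's: pass to a crossed product extension via Lemma~\ref{bd9}, note that centrality is preserved under equivalence, and read off from the crossed product law that $(ia,1)+(e_0,x)=(ia+e_0,x)$ while $(e_0,x)+(ia,1)=(e_0+\varphi(x)(ia),x)$, so that everything hinges on $ia\in ZE_0$ (which you correctly extract from $j_0A_0\subset ZB_0$, since the surjection $B_0\to E_0$ carries $ZB_0$ into $ZE_0$) together with the identity $\varphi(x)(ia)=ia$ for all $x\in\Pi_0$. But your argument stops precisely there: you write ``granting this'' and only gesture at where the hypotheses ``must be brought to bear.'' That identity is the entire content of the theorem, and you have not proved it, so what you have is a correct reduction rather than a proof.

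The deeper problem is that the missing step does not follow from the stated hypotheses. The crossed module axioms only give that each $\theta_g$ preserves $iA=\Ker(\gamma\pi)$ and that the resulting action of $G$ on $A$ factors through $\Pi_0$ --- this is exactly the $\Pi_0$-module structure recorded in the Corollary to Theorem~\ref{dl2} --- and nothing forces that action to be trivial; but triviality of that action is equivalent to centrality of $\mathcal E$. Concretely, take $\mathcal E_0\colon 0\to\mathbb{Z}/3\xrightarrow{\mathrm{id}}\mathbb{Z}/3\to 0\to 0$ with $\alpha=\mathrm{id}$ and $\gamma\colon 0\to\mathbb{Z}/2$ the (normal) inclusion of the trivial group; then $0\to\mathbb{Z}/3\to S_3\to\mathbb{Z}/2\to 0$ with $\beta=j$ satisfies every requirement of an $(\alpha,\gamma)$-prolongation but is not central, because here $\varphi(x)$ acts on $A=\mathbb{Z}/3$ by inversion. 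So the gap you flagged is real and cannot be closed without an additional hypothesis (for instance that the induced $\Pi_0$-action on $A$ is trivial, i.e.\ that $\theta_{u_x}$ restricts to the identity on $\Ker\pi$). To your credit, you have isolated exactly the point that the paper's own proof passes over when it asserts that centrality of $\mathcal E_h$ ``follows directly from the definition of operation in the crossed product $B_h$ and the hypothesis $A_0\subset ZB_0$.''
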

\begin{proof}
First,  it is easy to check that the crossed product extension  $\mathcal E_h$ mentioned in the proof of Theorem  \ref{dl8} is a central extension. This follows directly from the definition of operation in the crossed product  $B_h$ and the hypothesis  $A_0\subset ZB_0$. Now, if  $\mathcal E$ is an  $(\alpha,\gamma)$-prolongation of $\mathcal E_0$,  then by Lemma  \ref{bd9}, $\mathcal E$ is equivalent to  $\mathcal E_h$. Therefore,  $\mathcal E$ is a central extension and so the proof is completed.
\end{proof}
\begin{lem} \label{bd90} The function  $h:\Pi_0^2\rightarrow E_0 $ satisfying   \eqref{12} determines a 2-cocycle  $h_\ast$ with values in $A$.
\end{lem}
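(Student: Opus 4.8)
My plan is to obtain $h_\ast$ as a factor set of the central extension built from $h$. Recall from the proofs of Theorem \ref{dl8} and Lemma \ref{bd9} that such an $h$ — which, arising from an $(\alpha,\gamma)$-prolongation, satisfies not only \eqref{12} but also \eqref{01}, while $\varphi$ satisfies \eqref{tp} — yields the crossed product $B_h=[E_0,\varphi,h,\Pi_0]$ and the extension $\mathcal E_h\colon 0\to A\xrightarrow{j'}B_h\xrightarrow{p'}G\to 0$, which by Theorem \ref{dl10} is central. It therefore suffices to single out a \emph{normalized} section of $p'$ that is manufactured from the data at hand; its associated factor set will be the desired $2$-cocycle $h_\ast$ with values in $A$.

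First I would fix a normalized set-theoretic section $\lambda\colon G_0\to E_0$ of $\pi$ (for instance one induced by a normalized section of $p_0\colon B_0\to G_0$), retain the representatives $u_x\in G$ with $\sigma(u_x)=x$, $u_1=0$, and observe that, since $\gamma G_0=\Ker\sigma$, every $g\in G$ is uniquely $g=\gamma(g_0)+u_{\sigma(g)}$ with $g_0\in G_0$. For such a $g$ I put $w_g:=(\lambda(g_0),\sigma(g))\in B_h$. Then $p'(w_g)=\gamma\pi(\lambda(g_0))+u_{\sigma(g)}=\gamma(g_0)+u_{\sigma(g)}=g$ (using $\pi\lambda=\mathrm{id}$), and $w_0=(\lambda(0),1)=(0,1)=j'(0)$, so $\{w_g\}_{g\in G}$ is a normalized section of $p'$. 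Since $p'$ is a homomorphism, $p'(w_g+w_{g'}-w_{g+g'})=0$, hence $w_g+w_{g'}-w_{g+g'}\in\Ker p'=j'(A)$, and I define $h_\ast(g,g')\in A$ by
\[w_g+w_{g'}=j'\big(h_\ast(g,g')\big)+w_{g+g'}.\]
Because $\lambda$, the $u_x$ and $\varphi$ are fixed once and for all, $h_\ast$ is determined by $h$ alone — it enters only through the crossed-product rule $(\lambda(g_0),x)+(\lambda(g_0'),x')=\big(\lambda(g_0)+\varphi(x)\lambda(g_0')+h(x,x'),xx'\big)$.

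It then remains to check that $h_\ast$ is a normalized $2$-cocycle. Normalization $h_\ast(g,0)=h_\ast(0,g)=0$ is immediate from $w_0=j'(0)$. For the cocycle condition I would compute $w_g+w_{g'}+w_{g''}$ in the two ways allowed by associativity of $B_h$, each time rewriting $w_a+w_b$ as $j'(h_\ast(a,b))+w_{ab}$; using that $j'(A)\subseteq Z(B_h)$ (Theorem \ref{dl10}) to pull the $j'$-terms to the left and then cancelling $w_{gg'g''}$ and applying injectivity of $j'$, one is left with $h_\ast(g',g'')-h_\ast(gg',g'')+h_\ast(g,g'g'')-h_\ast(g,g')=0$ in $A$ (the $G$-action on $A$ being trivial since $\mathcal E_h$ is central). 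Hence $h_\ast\in Z^2(G,A)$, as claimed.

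The only real — and modest — difficulty is this last computation: organizing the expansion of the triple sum in the non-abelian group $B_h$ and keeping track of where the $j'(A)$-terms sit. The fact that makes it routine, namely centrality of $j'(A)$ in $B_h$, is already in hand from Theorem \ref{dl10}; the sole preparatory care needed is to take $\lambda(0)=0$ at the outset so that the resulting cocycle is normalized.
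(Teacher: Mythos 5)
Your construction is internally coherent, but it manufactures the wrong object. The $h_\ast$ you define is the factor set of the extension $0\to A\to B_h\xrightarrow{p'}G\to 0$ attached to the section $w$, i.e.\ an element of $Z^2(G,A)$. Lemma \ref{bd90}, as it is invoked in Lemma \ref{bd10} (which speaks of ``the $2$-cocycle $h_\ast\colon\Pi_0^2\to A$''), in Corollary \ref{hq13} and in Theorem \ref{dl15}, must produce a $2$-cocycle on $\Pi_0=\Coker\gamma$: the whole classification is carried out in $H^2(\Pi_0,A)$, not in $H^2(G,A)$, and your construction provides no passage from a class on $G$ back to a class on $\Pi_0$. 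Nor can this be repaired by restricting your $h_\ast$ to the representatives $u_x$: the restriction of a $G$-cocycle along the set-theoretic section $x\mapsto u_x$ of $\sigma$ is not a $\Pi_0$-cocycle. The content of the paper's lemma is different in kind: with $v_x\in B$ lifting $u_x$ and $\varepsilon h(x,y)=v_x+v_y-v_{xy}$, it asserts that $\varepsilon h(x,y)$ is a value of the factor set of the extension $0\to A\to B\to G\to 0$ and hence lies in $jA$, so that $h$ itself takes values in $\varepsilon^{-1}jA=\Ker\pi\cong A$ and $h_\ast:=(j^{-1}\varepsilon)_\ast h\colon\Pi_0^2\to A$ is already the desired cocycle, its cocycle identity being relation (\ref{01}) read inside the central subgroup $A\subset E_0$ with the $\Pi_0$-action induced by $\varphi$. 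Nothing in your argument addresses where the values of $h$ lie, which is the actual point of the lemma.

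Two lesser remarks. First, your $h_\ast$ also depends on the auxiliary section $\lambda$ of $\pi$, so ``determined by $h$'' holds only after fixing extra choices that the paper does not make, and the uniqueness statement of Lemma \ref{bd10} would then have to be re-proved for that dependence as well. Second, you tacitly strengthen the hypothesis from ``$h$ satisfies \eqref{12}'' to ``$h$ satisfies \eqref{12}, (\ref{01}) and (\ref{tp})'' so that $B_h$ exists at all; this is defensible in context, since the lemma is only applied once a prolongation is known to exist, but it should be stated, because for an $h$ satisfying \eqref{12} alone the relation \eqref{eq8} carries the obstruction term $k$ and your cocycle computation would fail by exactly that term.
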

\begin{proof}
For  representatives  $\left\{b_r|r \in G\right\} $
of an extension
$$0\ri A\stackrel{j}{\rightarrow}B\stackrel{p}{\rightarrow}G\ri 0,$$
a factor set  $\kappa(r,s)=b_r+b_s-b_{r+s}$ takes values in   $jA$. Then, for  $r=u_x,s=u_y$  choose $ b_r=v_x, b_s=v_y$, one has
$$\varepsilon h(x,y)=v_x+v_y-v_{xy}=b_r+b_s-b_{r+s}=\kappa(r,s)\in jA.$$
Hence, we obtain a 2-cocycle $h_\ast=(j^{-1}\varepsilon)_\ast
h:\Pi_0^2\ri A$.
\end{proof}
\begin{lem}\label{bd10}
Let $\mathcal E$ be an $( \alpha ,\gamma)$-prolongation of  $\mathcal E_0$.
 Then, the 2-cocycle  $h_\ast:\Pi_0^2\rightarrow A $ in Lemma  \ref{bd90}
  is uniquely determined up to a coboundary  $\delta  (t_\ast)\in B^2(\Pi_0,A)$.
\end{lem}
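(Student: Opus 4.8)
The plan is to locate precisely where arbitrary choices enter the construction of $h_\ast$ in Lemma \ref{bd90} and to check that none of them alters $h_\ast$ beyond a $2$-coboundary. The representatives $u_x$ in $G$ (with $u_1=0$) are fixed throughout, so the only freedom is the choice of lifts $v_x\in B$ with $p(v_x)=u_x$ (and $v_1=0$) along the exact sequence \eqref{eq1'}; recall that then $\varepsilon h(x,y)=v_x+v_y-v_{xy}\in jA$ and $h_\ast=(j^{-1}\varepsilon)_\ast h$. First I would take a second such family $\{v'_x\}$. Since $p(v'_x)=p(v_x)$, one may write $v'_x=j(t_\ast(x))+v_x$ for a unique $t_\ast(x)\in A$ (because $\Ker p=jA$), with $t_\ast(1)=0$; conversely every function $t_\ast\colon\Pi_0\to A$ with $t_\ast(1)=0$ arises this way, so the ambiguity in $v_x$ is exactly parametrized by such $t_\ast$.

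The main step is a direct computation of the new factor set, and the hypothesis that a prolongation is a \emph{central} extension is exactly what makes it go through. By Theorem \ref{dl10}, $jA\subseteq ZB$, so each $j(t_\ast(x))$ is central in $B$; commuting these past the $v$'s and using that $j$ is a homomorphism into the abelian group $jA$ gives
\[
v'_x+v'_y-v'_{xy}=j\bigl(t_\ast(x)+t_\ast(y)-t_\ast(xy)\bigr)+\varepsilon h(x,y).
\]
Applying $j^{-1}$ (all terms lie in $jA$) yields $h'_\ast(x,y)=h_\ast(x,y)+t_\ast(x)+t_\ast(y)-t_\ast(xy)$. To identify the extra term with $(\delta t_\ast)(x,y)$ I would note that the $\Pi_0$-module structure on $A$ coming from a prolongation (the Corollary to Theorem \ref{dl2}) is trivial: for any lift $v_x$ of $u_x$,
\[
x\cdot a=\theta^\ast_{u_x}(a)=i^{-1}\varepsilon^{-1}\mu_{v_x}(\varepsilon i a)=i^{-1}\varepsilon^{-1}\mu_{v_x}(j a)=i^{-1}\varepsilon^{-1}(j a)=a,
\]
again using $jA\subseteq ZB$. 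Hence $(\delta t_\ast)(x,y)=t_\ast(x)+t_\ast(y)-t_\ast(xy)$, so $h'_\ast=h_\ast+\delta t_\ast$ with $\delta t_\ast\in B^2(\Pi_0,A)$, as claimed.

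I expect the obstacles to be bookkeeping rather than conceptual: justifying carefully that the general change of lifts has the stated form $v'_x=j(t_\ast(x))+v_x$, and invoking centrality at the two places above — once to move $j(t_\ast(x))$ past the $v$'s inside the non-abelian group $B$, and once to see that the induced action of $\Pi_0$ on $A$ is trivial so that $\delta t_\ast$ has the displayed shape. If one also wants the cleaner statement that $[h_\ast]\in H^2(\Pi_0,A)$ depends only on $\mathcal E$ up to equivalence, it suffices to add the routine observation that an equivalence $\beta^\ast\colon B\to B'$ of prolongations carries a family of lifts for $\mathcal E$ to one for $\mathcal E'$ with the same $\varepsilon h$; but for the statement as given, the computation above is all that is needed.
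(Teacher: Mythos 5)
Your proof is correct and follows essentially the same route as the paper: parametrize the change of lifts $v_x\mapsto v'_x$ by a function with values in $\Ker p=jA$, compute the difference of the two factor sets, and recognize it as a coboundary. The only cosmetic difference is that the paper works with $t\colon\Pi_0\to E_0$ (so the correction term appears as $t_x+\varphi(x)(t_y)-t_{xy}$, using only $jA\subset Z(\varepsilon E_0)$), whereas you push everything into $A$ at once and invoke Theorem \ref{dl10} both to commute the central terms and to observe that the induced $\Pi_0$-action on $A$ is trivial — a point the paper leaves implicit.
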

\begin{proof}
If $v'_x$ is another representative of  $\Pi_0$ in $B$ such that  $p(v'_x)=p(v_x)=u_x$, then there exists a function $t: \Pi_0\rightarrow E_0$ such that $v'_x=\varepsilon t_x+v_x.$
This  set of representatives gives a factor set  $\varepsilon h',$ where   $h':\Pi_0^2\rightarrow E_0 $ is a function satisfying  \eqref{12}. Then,
\begin{align*}\varepsilon h'(x,y)&=v'_x+v'_y-v'_{xy}\\
&=(\varepsilon {t_x}+v_x)+(\varepsilon{ t_y}+v_y)-(\varepsilon{ t_{xy}}+v_{xy})\\
&=\varepsilon t_x+\mu_{v_x}(\varepsilon t_y)+v_x+v_y-v_{xy}-\varepsilon t_{xy}\\
&=\varepsilon t_x+\varepsilon \varphi(x) (t_y)+\varepsilon h(x,y)-\varepsilon t_{xy}.   \end{align*}
Since $\varepsilon{ t_{xy}}\in \Ker p = $Im$j$ and since
$jA\subset Z(\beta B_0)= Z(\varepsilon E_0)$, one has \begin{align*} \varepsilon h'(x,y)=\varepsilon [t_x+\varphi (x)(t_y)-t_{xy}]+ \varepsilon h(x,y). \end{align*}
Again, since $\varepsilon$ is a monomorphism so
\begin{align*}  h'(x,y)-h(x,y)=t_x+\varphi (x)(t_y)-t_{xy}=\delta t(x,y).\end{align*}
Set $t_\ast=(\varepsilon^{-1}j)_{\ast}t$,  we have
 $h'_\ast-h_\ast=\delta (t_\ast)$, as claimed.
\end{proof}
It follows from Lemmas  \ref{bd90} and \ref{bd10} that
\begin{cor}\label{hq13}
The cohomology class of  $h_\ast $ is uniquely determined in \linebreak $ H^2(\Pi_0,A)$.
\end{cor}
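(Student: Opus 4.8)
The plan is to obtain the statement as a direct consequence of Lemmas~\ref{bd90} and~\ref{bd10}, so that the real content of a proof is to make precise which ambiguity is being divided out. Recall that the representatives $u_x$ in $G$ (with $u_1=0$) are fixed throughout, so that in the construction of $h_\ast$ associated with an $(\alpha,\gamma)$-prolongation $\mathcal E$ the only remaining choice is a system of representatives $v_x$ in $B$ with $p(v_x)=u_x$, which exists by the exact sequence~\eqref{eq1'}. For any such system the relation $\varepsilon h(x,y)=v_x+v_y-v_{xy}$ determines $h\colon\Pi_0^2\to E_0$ uniquely, because $\varepsilon$ is a monomorphism (Lemma~\ref{bd1}); applying $p=\gamma\pi\varepsilon^{-1}$ shows this $h$ satisfies~\eqref{12}; and Lemma~\ref{bd90} produces from it a well-defined $2$-cocycle $h_\ast=(j^{-1}\varepsilon)_\ast h\in Z^2(\Pi_0,A)$. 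Thus each admissible system $\{v_x\}$ yields a genuine element of $Z^2(\Pi_0,A)$.

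Next I would compare two such systems $\{v_x\}$ and $\{v'_x\}$, both lifting $\{u_x\}$ along $p$, with associated $2$-cocycles $h_\ast$ and $h'_\ast$. By Lemma~\ref{bd10} there is a $1$-cochain $t_\ast$ with $h'_\ast-h_\ast=\delta(t_\ast)\in B^2(\Pi_0,A)$. Passing to the quotient group $H^2(\Pi_0,A)=Z^2(\Pi_0,A)/B^2(\Pi_0,A)$ gives $[h'_\ast]=[h_\ast]$. Hence the class $[h_\ast]\in H^2(\Pi_0,A)$ does not depend on the chosen $\{v_x\}$, and therefore depends only on $\mathcal E$ together with the fixed data $(\alpha,\gamma,\theta)$ and $\{u_x\}$; this is precisely the assertion of the corollary.

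There is no serious obstacle here; the one point I would check with a little care — rather than any computation — is that Lemma~\ref{bd10} genuinely exhausts the residual freedom. Indeed, any second lift $v'_x$ of $u_x$ along $p$ satisfies $p(v'_x-v_x)=0$, so $v'_x-v_x\in\Ker p=\mathrm{Im}\,j=\varepsilon(iA)$, i.e.\ $v'_x=\varepsilon t_x+v_x$ with $t_x\in iA$; this is exactly the hypothesis of Lemma~\ref{bd10}, and $t_\ast=(\varepsilon^{-1}j)_\ast t$ is then the required $1$-cochain with values in $A$. Granting this observation, nothing further is needed. (If one also wanted independence of the choice of $u_x$, it would suffice to combine the present argument with the earlier lemma on replacing $u_x$ by $u'_x$, but that is not required for the statement as formulated.)
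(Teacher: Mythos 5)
Your proof is correct and takes essentially the same route as the paper, which derives the corollary directly from Lemmas~\ref{bd90} and~\ref{bd10} without further argument. Your extra check that every second lift $v'_x$ of $u_x$ differs from $v_x$ by an element of $\Ker p=\mathrm{Im}\,j$, and hence falls under the hypothesis of Lemma~\ref{bd10}, is exactly the point the paper leaves implicit.
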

The following corollary is deduced from Lemma  \ref{bd9}.
\begin{cor}\label{hq13}
Two crossed products  $B_h =[E_0,\varphi,h,\Pi_0]$ and \linebreak $B_{h'} =[E_0,\varphi,h',\Pi_0]$ are equivalent if and only if the cohomology classes of
$h$ and  $h'$ are equal in  $H^2(\Pi_0,A).$
\end{cor}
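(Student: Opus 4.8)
Proof proposal for Corollary \ref{hq13} (the equivalence criterion for crossed products).

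The plan is to exploit Lemma \ref{bd9} together with Lemmas \ref{bd90} and \ref{bd10}, which already pin down the class of $h_\ast$ in $H^2(\Pi_0,A)$ independently of the chosen representatives $v_x$. First I would prove the ``only if'' direction. Suppose $B_h$ and $B_{h'}$ are equivalent as $(\alpha,\gamma)$-prolongations of $\mathcal E_0$, via an isomorphism $\beta^\ast:B_h\to B_{h'}$ compatible with the identities on $A$ and $G$ and with $\beta^\ast\beta'_h=\beta'_{h'}$. In $B_h$ the elements $v_x=(0,x)$ are representatives of $\Pi_0$ lying over $u_x\in G$, and the factor set they induce is exactly $\varepsilon h$ (this is immediate from the definition of the operation in $B_h$ and the normalization $h(x,1)=h(1,y)=0$). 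Applying $\beta^\ast$, the elements $\beta^\ast(v_x)$ are representatives in $B_{h'}$ lying over the same $u_x$, and the associated factor set is the $\beta^\ast$-image of $\varepsilon h$; after passing through the canonical map $j^{-1}\varepsilon$ one checks this is precisely $h_\ast$. On the other hand, $v'_x=(0,x)$ in $B_{h'}$ give the factor set $h'_\ast$. By Lemma \ref{bd10} (the uniqueness up to coboundary part, applied to the single prolongation $B_{h'}$ with its two choices of representatives) we conclude $h_\ast$ and $h'_\ast$ differ by a coboundary, i.e. $[h]=[h']$ in $H^2(\Pi_0,A)$. Here I am tacitly using that $h_\ast$ depends on $h$ only through the value $j^{-1}\varepsilon h$, which is how the cocycle is defined in Lemma \ref{bd90}; so $[h]=[h']$ in $H^2(\Pi_0,A)$ is literally the statement ``$[h_\ast]=[h'_\ast]$''.

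For the ``if'' direction, assume $h'_\ast-h_\ast=\delta(t_\ast)$ for some $t_\ast:\Pi_0\to A$, equivalently $h'(x,y)-h(x,y)=t_x+\varphi(x)(t_y)-t_{xy}$ where $t=i\circ t_\ast:\Pi_0\to A\subset ZE_0$ (using that $A$ is central in $E_0$, so the twisting by $\varphi(x)$ restricted to $A$ is the module action and the formula makes sense). I would then define a map $\beta^\ast:B_h\to B_{h'}$ by $(e_0,x)\mapsto(e_0+t_x,x)$ and verify directly from the crossed-product multiplication law that it is a homomorphism: expanding $\beta^\ast\big((e_0,x)+(e_0',y)\big)$ and $\beta^\ast(e_0,x)+\beta^\ast(e_0',y)$ and comparing, the discrepancy is exactly the coboundary relation above, using centrality of $t_y$ to commute it past $\varphi(x)e_0'$. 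It is an isomorphism with inverse $(e_0,x)\mapsto(e_0-t_x,x)$, it restricts to the identity on $A$ (since $t_1=0$, forced by normalization) and induces the identity on $G$ because $p'(e_0+t_x,x)=\gamma\pi(e_0+t_x)+u_x=\gamma\pi(e_0)+u_x$ as $t_x\in A=\Ker(\gamma\pi)$. Finally one checks $\beta^\ast\beta'_h=\beta'_{h'}$: both $\beta'_h$ and $\beta'_{h'}$ send $b_0\mapsto(\overline{b_0},1)$ by \eqref{eq17}, and $\beta^\ast(\overline{b_0},1)=(\overline{b_0}+t_1,1)=(\overline{b_0},1)$. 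Hence the two crossed products are equivalent.

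The main obstacle I anticipate is purely bookkeeping in the ``if'' direction: making sure the candidate map $\beta^\ast$ is well-defined as a homomorphism requires the central lift $t=i\circ t_\ast$ to satisfy the coboundary identity in $E_0$ (not merely in $A$), which is fine because $A$ sits centrally in $E_0$ and $\varphi(x)$ acts on it through $\theta^\ast$, but one must be careful that the equality $h'-h=t_x+\varphi(x)(t_y)-t_{xy}$ holds as an identity of functions into $E_0$ and not just into $A$ — this is exactly what was already established inside the proof of Lemma \ref{bd10}, so it can be quoted. A secondary subtlety is that in the ``only if'' direction one should note the isomorphism $\beta^\ast$ need not send the distinguished representatives $(0,x)$ of $B_h$ to the distinguished representatives $(0,x)$ of $B_{h'}$; that is precisely why Lemma \ref{bd10} is invoked, to absorb the difference into a coboundary. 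With those two points flagged, everything else is routine verification from the definitions.
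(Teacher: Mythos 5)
Your proof is correct and follows the route the paper itself intends: the ``only if'' direction is exactly the application of Lemma \ref{bd10} to two systems of representatives over the same $u_x$, and the ``if'' direction is the standard explicit equivalence $(e_0,x)\mapsto(e_0+t_x,x)$, whose verification uses precisely the centrality of $t_x\in A\subset ZE_0$ as you note. The paper merely asserts the corollary is ``deduced from Lemma \ref{bd9}'' without writing out either direction, so your argument supplies the details (correctly, up to an immaterial sign convention in the coboundary) rather than diverging from the paper's method.
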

%
%
%
Denote by  $\Ext_{(\alpha,\gamma)}(G,A)$ the set of all equivalence classes of
 $(\alpha,\gamma)$-prolongations of   $\mathcal E_0$, we obtain the following main result.
\begin{thm}[Schreier theory for $(
\alpha,\gamma)$-prolongations of central extensions]
 If $\mathcal E_0$ has an  $(\alpha,\gamma)$-prolongation,  then the set
$\Ext_{(\alpha,\gamma)}(G,A)$ is torseur under the group  $H^2(\Pi_0,A)$. \label{dl15}
\end{thm}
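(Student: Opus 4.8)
The plan is to establish that $\Ext_{(\alpha,\gamma)}(G,A)$ is a torseur (principal homogeneous space) under $H^2(\Pi_0,A)$ by defining a free and transitive action of $H^2(\Pi_0,A)$ on this set, once the set is known to be nonempty. By hypothesis $\mathcal E_0$ has an $(\alpha,\gamma)$-prolongation, so by Lemma \ref{bd9} and Corollary \ref{hq13} every equivalence class is represented by a crossed product extension $\mathcal E_h=[E_0,\varphi,h,\Pi_0]$, and two such are equivalent precisely when $[h_\ast]=[h'_\ast]$ in $H^2(\Pi_0,A)$. Thus the map $\mathcal E_h\mapsto [h_\ast]$ is well-defined on equivalence classes once we fix a reference prolongation to subtract.

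\medskip

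The key steps, in order, are as follows. First, fix one $(\alpha,\gamma)$-prolongation $\mathcal E_{h_0}$ once and for all, with its associated $2$-cocycle $h_0$ satisfying \eqref{12} and the relations \eqref{01}, \eqref{tp}. Second, for any $2$-cocycle class $[c]\in H^2(\Pi_0,A)$, choose a representative cocycle $c:\Pi_0^2\ri A$ and set $h=h_0+ic$ (using $i:A\ri E_0$ and the fact that $iA\subset ZE_0$, so that $h$ still satisfies \eqref{12} since $\gamma\pi(ic)=0$, and $\varphi,h$ still satisfy \eqref{tp} because $\mu_{ic(x,y)}=id$). This produces a new crossed product $B_h=[E_0,\varphi,h,\Pi_0]$, hence a new $(\alpha,\gamma)$-prolongation $\mathcal E_h$; define $[c]\cdot[\mathcal E_{h_0}]=[\mathcal E_h]$, and more generally $[c]\cdot[\mathcal E_h]=[\mathcal E_{h+ic}]$. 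Third, check that this is well-defined: replacing $c$ by a cohomologous cocycle $c+\delta t_\ast$ changes $h$ by $h+i\,\delta t_\ast=h+\delta(it)$, which by Lemma \ref{bd10} and Corollary \ref{hq13} gives an equivalent extension; and the group law $([c_1]+[c_2])\cdot[\mathcal E_h]=[c_1]\cdot([c_2]\cdot[\mathcal E_h])$ is immediate from additivity of $h\mapsto h+ic$. Fourth, prove transitivity: given two prolongations, represent both as crossed products $\mathcal E_h$ and $\mathcal E_{h'}$ with the \emph{same} $\varphi$ (possible because $\varphi=\theta\circ u$ depends only on the induced $\theta$, which is fixed, and on the choice of representatives $u_x$, which we fix); then by \eqref{tp} the difference $h'-h$ satisfies $\mu_{(h'-h)(x,y)}=id$, so $(h'-h)(x,y)\in ZE_0$, and a short computation using that both satisfy \eqref{01}/\eqref{tp} shows $h'-h$ lands in $A$ and is a $2$-cocycle $c$ with $h'=h+ic$, so $[c]\cdot[\mathcal E_h]=[\mathcal E_{h'}]$. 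Fifth, prove freeness: if $[c]\cdot[\mathcal E_h]=[\mathcal E_h]$, then $\mathcal E_{h+ic}\sim\mathcal E_h$, so by Corollary \ref{hq13} $[(h+ic)_\ast]=[h_\ast]$ in $H^2(\Pi_0,A)$, i.e. $[c]=0$.

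\medskip

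I expect the main obstacle to be the transitivity step, specifically the bookkeeping needed to guarantee that two arbitrary prolongations can be presented as crossed products with a \emph{common} $\varphi$ and \emph{common} system of representatives $u_x$ in $G$, and then verifying that the pointwise difference $h'-h$ is genuinely a normalized $A$-valued $2$-cocycle rather than merely an $E_0$-valued function. The point is that the induced homomorphism $\theta:G\ri\Aut E_0$ is the same for all coverings of the pre-prolongation $(\alpha,\gamma,\theta)$ (this is built into the notion of prolongation inducing $\theta$), so $\varphi(x)=\theta_{u_x}$ is literally the same function once $u_x$ is fixed; then subtracting \eqref{01} for $h'$ from \eqref{01} for $h$ and using the centrality of the difference (forced by \eqref{tp}) yields the cocycle identity. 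All remaining verifications — well-definedness, the group action axioms, freeness — are routine consequences of Lemma \ref{bd9}, Lemma \ref{bd10}, and Corollary \ref{hq13}, together with $iA\subset ZE_0$. Once transitivity and freeness are in hand, nonemptiness is exactly the hypothesis, and the torseur structure follows.
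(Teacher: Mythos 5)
Your proposal is correct and follows essentially the same route as the paper: the action is $h\mapsto h+\tau$ on crossed-product representatives $[E_0,\varphi,h,\Pi_0]$, well-definedness and freeness come from Corollary \ref{hq13}, and transitivity comes from observing that two representatives with the same $\varphi$ (and both satisfying \eqref{12}) differ by an $A$-valued $2$-cocycle. The one point you flag as the ``main obstacle'' --- that all prolongations under consideration induce the same $\theta$, hence a common $\varphi$ --- is handled identically (i.e., assumed implicitly) in the paper's own proof.
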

\begin{proof}
Firstly, we show that  $\Ext_{(\alpha,\gamma)}(G,A)$ is torseur under the group  \linebreak $H^2(\Pi_0,\varepsilon^{-1}j(A))$. In fact, by Corollary
\ref{hq13}, we define a map  $\omega$ from $ \linebreak
H^2(\Pi_0,\varepsilon^{-1}j(A))$ onto the group of transformations of
$\Ext_{(\alpha,\gamma)}(G,A)$  by formula
\begin{equation*}\omega(\overline{\tau})
(\mathrm{cls}[E_0,\varphi,h,\Pi_0])=
\mathrm{cls}[E_0,\varphi,h+\tau,\Pi_0].
\end{equation*}
From the above arguments,  $\omega$ is really an element of  the group of transformations of  $\Ext_{(\alpha,\gamma)}(G,A)$.
Futhermore,  $\omega$ is a group homomorphism.

To prove that  $\Ext_{(\alpha,\gamma)}(G,A)$  is a torseur under
$H^2(\Pi_0,\varepsilon^{-1}j(A))$, we also point out that for any two $(
\alpha,\gamma)$-prolongations $\mathcal E_1$, $\mathcal E_2$, there is always the only element  $\overline{\tau}\in H^2(\Pi_0,\varepsilon^{-1}j(A))$
such that
$$\mathrm{cls}\mathcal E_2=\omega(\overline{\tau})
(\mathrm{cls}\mathcal E_1).$$ In fact, we have cls$\mathcal
E_i=\mathrm{cls}[E_0,\varphi,h_i,\Pi_0]$, $i=1,2,$ where
$\varphi(x)=\theta_{u_x}$, and $h_i$'s are  functions defined by sets of representatives $v^{i}_{x}\in B_{h_i}$, where $v^{i}_{x}$'s satisfy
$p(v^{i}_{x})=u_x$. Then,  thanks to the proof of Theorem  \ref{dl8}, one has
$$\gamma\pi[h_i(x,y)]=f(x,y).$$
It follows that  $h_2=h_1+r,\; r(x,y)\in \Ker(\gamma \pi)=\Ker\pi=\varepsilon^{-1}j(A)$.

By transformations based on the established formula, we have
$\delta r=0$, that is,  $r\in Z^2(\Pi_0,\varepsilon^{-1}j(A))$.

Finally, by the canonical isomorphism
$$H^2(\Pi_0,\varepsilon^{-1}j(A))\leftrightarrow H^2(\Pi_0,A),$$ the theorem  is proved.
\end{proof}

\vskip 0.4 true cm



\end{document}